\theoremstyle{plain}                    
\newtheorem{thm}{Theorem}[section]
\newtheorem{lem}[thm]{Lemma}
\newtheorem{prop}[thm]{Proposition}
\newtheorem{cor}[thm]{Corollary}
\theoremstyle{definition}
\newtheorem{defn}[thm]{Definition}
\newtheorem{ex}[thm]{Example}
\newtheorem*{qs}{Question}
\theoremstyle{remark}
\newtheorem{rmk}[thm]{Remark}
\numberwithin{equation}{section}
\newcommand{\R}{\mathbb{R}}
\newcommand{\C}{\mathbb{C}}
\newcommand{\Z}{\mathbb{Z}}
\newcommand{\hyp}{\mathbb{H}}
\newcommand{\cp}{\mathbb{C}\mathbb{P}^1}
\newcommand{\rp}{\mathbb{R}\mathbb{P}^1}
\newcommand{\pslc}{\mathrm{PSL}_2\C}
\newcommand{\pslr}{\mathrm{PSL}_2\R}
\newcommand{\dev}{\textsf{dev}}
\newcommand{\eu}[1]{\mathcal{E}(#1)}
\DeclareMathOperator{\sign}{sign}
\lbrace\begin{array}{@{}l@{}}}%
\def\qr#1#2{%
      \raise1ex\hbox{$#1$}\Big/ \lower1ex\hbox{$#2$}%
}
\def\qrr#1#2{%
      \raise1ex\hbox{$#1$}\Big/\Big/ \lower1ex\hbox{$#2$}%
}
\def\ql#1#2{%
      \lower1ex\hbox{$#1$}\Big\backslash \raise1ex\hbox{$#2$}%
}
\lbrace\begin{array}{@{}l@{}}}%
\begin{document}
\title[Geometrisation of purely hyperbolic representations in $\pslr$]{GEOMETRISATION OF PURELY HYPERBOLIC REPRESENTATIONS IN $\pslr$ }
\author{GIANLUCA FARACO}
\address{Dipartimento di Matematica - Universit\`a di Parma, Parco Area delle Scienze 53/A, 43132, Parma, Italy}
\curraddr{}
\email{frcglc@unife.it}

\thanks{}

\date{February 2019}
\subjclass[2010]{57M50}

\dedicatory{}

\begin{abstract}
Let $S$ be a surface of genus $g$ at least $2$. A representation $\rho:\pi_1 S\longrightarrow \pslr$ is said to be purely hyperbolic if its image consists only of hyperbolic elements along with the identity. We may wonder under which conditions such representations arise as the holonomy of a branched hyperbolic structure on $S$. In this work we will characterize them completely, giving necessary and sufficient conditions.
\end{abstract}

\maketitle
\tableofcontents

\section{Introduction}
\subsection{About the problem} 

\noindent A branched hyperbolic structure on a surface $S$ is a geometric structure locally modeled on the hyperbolic plane, with its group of isometries $\pslr$. Any hyperbolic structure induces in a very natural way a holonomy representation $\rho:\pi_1S\longrightarrow \pslr$, that encodes geometric data about the structure. \\
\noindent The reverse problem to recover a branched hyperbolic structure from a given representation $\rho$ is more arduous and longer. Even worse there are representations from which it is not possible to recover a branched hyperbolic structure, \emph{i.e.} there are some representations that do not arise as the holonomy of a branched hyperbolic structure. In \cite{TA}, Tan gives an explicit example of such representation, that we will report here in \ref{me}.  For this reason, we will say that a representation $\rho$ is \emph{geometrisable} if it arises as the holonomy of a branched hyperbolic structure on $S$. \\
\noindent In this work we are interested in a special class of representations, namely \emph{purely hyperbolic representations}. Of course, Fuchsian representations are purely hyperbolic, and it is well-known that each of these representations is the holonomy of a unique complete hyperbolic structure (see \cite{GO88}). On the other hand, there are purely hyperbolic representations which are not Fuchsian; and then it is natural to ask if they arise as the holonomy of a branched hyperbolic structure; \emph{i.e.} a hyperbolic structure where cone points are allowed. We may immediately rule out those purely hyperbolic representations which are also elementary. Indeed, the Euler number of an elementary representation is always zero (see \cite{GO88}). On the other hand, if a representation $\rho$ arises as the holonomy of a branched hyperbolic structure then its Euler number is always different to zero. For this reason, in the sequel, we will consider only non-elementary representations.\\

\noindent Unfortunately, not all purely hyperbolic (and non-elementary) representations are the holonomy of a branched hyperbolic structure. Indeed, they arise if they satisfy a necessary (but not sufficient) condition which we will describe in subsection \ref{ss32}. If such condition holds, $\rho$ induces a Fuchsian representation $\rho_0:\pi_1\Sigma\longrightarrow \pslr$ where $\Sigma$ is closed surface of genus lower than the genus of $S$, and a map $f:S\longrightarrow \Sigma$ such that the following equality holds: $\rho=\rho_0\circ f_*$. The nature of the map $f$ determines whether a purely hyperbolic representation $\rho$ is geometrisable by a branched hyperbolic structure or not. Precisely the main result of this paper is:\\

\noindent \textbf{Theorem \ref{mainthm}:} \emph{Let $\rho:\pi_1S\longrightarrow \pslr$ be a non-Fuchsian, purely hyperbolic and non-elementary representation. Then $\rho$ is geometrisable by a branched hyperbolic structure if and only if $\rho\big(\pi_1S\big)$ is cocompact and the map
\[ f:S\longrightarrow \Sigma=\hyp^2/\rho\big(\pi_1S\big)
\] is not homotopic to a pinch map.}\\

\noindent Even if a non-elementary and purely hyperbolic representation $\rho$ does not arise as the holonomy of a branched hyperbolic structure; it arises as the holonomy of a (possibly branched) $\cp-$structure on $S$; \emph{i.e.} a geometric structure locally modeled on the Riemann sphere $\cp$ with its group of holomorphic automorphisms $\pslc$, the curious reader may see \cite{CDF} and \cite{GKM}.\\

\noindent Coming back to our structures; the problem of recovering a branched hyperbolic structure (if possible) from other types of representations is essentially open. In \cite{MA2}, Mathews considers this problem for representations $\rho$ with almost extremal Euler number, that is $\eu\rho=\pm\big(\chi(S)+1\big)$, giving some partial results. In the forthcoming paper \cite{FA} we consider the same type of representations considered by Mathews, and we improve his result giving a complete characterisation for surfaces of genus $2$.

\subsection{Structure of the paper} This paper is organized as follows. Section \ref{s1} contains the background material we need in order to tackle the main part of this work. More precisely the second section contains the basic definitions and lemmata, together with an entire paragraph of examples of purely hyperbolic representations that arise as the holonomy of a branched hyperbolic structure.\\
\noindent In section \ref{s2} we start with the main motivating example of this work: the Tan's counterexample \ref{me}. Hence we turn to show some lemmata that, all together, lead to the main theorem with its proof. Finally we will give a direct computation of the Euler number for purely hyperbolic representations.\\

\noindent \textbf{Acknowledgements.} I would like to thank my advisor Stefano Francaviglia for introducing me to this theory and for his constant encouragement. His advice and suggestions have been highly valuable. I also would like to thank Lorenzo Ruffoni for useful comments and suggestions about this work. Finally, I would like to thank an anonymous referee(s) for many useful comments and suggestions.\\

\section{Background materials}\label{s1}

\noindent Let $S$ be a closed, connected and orientable surface of genus $g$ greater than $2$. We will denote by $\hyp^2$ the hyperbolic plane and by $\pslr$ its group of orientation preserving isometries acting by M\"obius transformations
$$\pslr \times \hyp^2 \to \hyp^2, \quad \left(\begin{array}{cc} 
a & b\\ c & d \\ \end{array} \right), z \mapsto \dfrac{az+b}{cz+d}$$

\subsection{Branched hyperbolic structures} We are going to define the main structure we are interested in, that is \emph{branched hyperbolic structures}. For our purposes we only need to define branched hyperbolic structures in dimension $2$, though the following definition has obvious generalisations to higher dimensions and also other types of geometries. The curious reader may see \cite{CHK} for further details.

\begin{defn}[Hyperbolic cone-structure] A \emph{hyperbolic cone-structure} $\sigma$ on a $2$-manifold $S$ is the datum of a triangulation of $S$ and a metric, such that
\begin{itemize}
\item[1] the link of each simplex is piecewise linear homeomorphic to a circle, and  
\item[2] the restriction of the metric to each simplex is isometric to a geodesic simplex in hyperbolic space.
\end{itemize}
\end{defn}

\noindent Hence, a $2-$dimensional hyperbolic cone-structure is a surface obtained by piecing together geodesic triangles in $\hyp^2$. The definition clearly includes open surfaces and surfaces with possibly geodesic boundary. However we remind the reader that in the third part we will only consider closed surfaces.\\

\noindent Any interior point $p$ of $S$ has a neighbourhood locally isometric to $\hyp^2$, except possibly at some vertices of the triangulation, around which the angles sum to $\theta\neq 2\pi$. Such points are called \emph{cone points}. The neighbourhood of a cone point is isometric to a wedge of angle $\theta$ in the hyperbolic plane, with sides glued (that is a cone). The angle $\theta$ is called the cone angle at $p$ and letting $\theta = 2(k+1)\pi$, we define the number $k$ \emph{the order} \textsf{ord}$(p)$ of the cone point at $p$. If $S$ has boundary then this boundary will be piecewise geodesic. There may be vertices on the boundary around which the angles sum to $\theta\neq \pi$. Such points are called \emph{corner points} and the value of $\theta$ is the corner angle. Letting $\theta = \pi(1+2s)$, then $s$ is the order of the corner points. In such a case a corner point has neighbourhood isometric to a wedge of angle $\theta$ in $\hyp^2$ (without sides glued). Singular points of $\sigma$ on $S$ are cone or corner points, whereas any other points are called \emph{regular points}. Note a cone angle may be any positive real number, in particular it can be more than $2\pi$ for interior points or greater than $\pi$ for boundary points. In the sequel we will only consider closed surfaces whose cone points have order $k\in\Bbb N$. Let us now introduce the following definition.

\begin{defn}[Branched hyperbolic structure]
A \emph{branched hyperbolic structure} is a hyperbolic cone-structure such that the order of every cone point is a positive integer.
\end{defn}

\noindent We note that a complete hyperbolic structure $\sigma_0$ on $S$ can be seen as  hyperbolic cone-structure where all points are regular. Cone points may be considered as points on which the curvature is concentrated; however topology imposes limits on the allowable cone angles in a $2-$dimensional branched hyperbolic structure which can be deduced from the Gau\ss-Bonnet theorem. Precisely we have the following result.

\begin{prop}\label{gbcon}
Let $S$ be a closed, connected and orientable surface. Any hyperbolic cone-structure $\sigma$ on $S$ with cone points of order $k_p$ satisfies the following relation
\[ \chi(S)+\sum_{p \in S} k_p <0, \text{ where } k_p=\textsf{\emph{ord}}(p).
\] Indeed, the left-hand side is $2\pi$ times the negative of the hyperbolic area of $S$.
\end{prop}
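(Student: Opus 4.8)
The plan is to derive the inequality directly from the Gauss--Bonnet theorem applied to a hyperbolic cone-structure. The key idea is that the total curvature of $S$ decomposes into a smooth part, coming from the constant curvature $-1$ on the regular locus, and a discrete part, concentrated at the cone points, where the angle defect measures the deviation from the flat (Euclidean) picture. The standard Gauss--Bonnet formula for a surface with cone points states
\[
\int_S K \, dA + \sum_{p\in S} (2\pi - \theta_p) = 2\pi\,\chi(S),
\]
where $\theta_p$ is the cone angle at $p$ and the sum is over all cone points. Since the structure is hyperbolic, $K\equiv -1$ on the smooth part, so $\int_S K\,dA = -\mathrm{Area}(S)$.

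First I would recall that by definition the cone angle at a cone point of order $k_p$ is $\theta_p = 2(k_p+1)\pi$, so that the angle defect is $2\pi - \theta_p = 2\pi - 2(k_p+1)\pi = -2\pi k_p$. Substituting this into the Gauss--Bonnet formula gives
\[
-\mathrm{Area}(S) - 2\pi \sum_{p\in S} k_p = 2\pi\,\chi(S),
\]
which rearranges to
\[
2\pi\Big(\chi(S) + \sum_{p\in S} k_p\Big) = -\mathrm{Area}(S).
\]
Since $S$ is a genuine $2$-dimensional surface equipped with a metric, its hyperbolic area is strictly positive, $\mathrm{Area}(S) > 0$. Therefore the left-hand side is strictly negative, yielding
\[
\chi(S) + \sum_{p\in S} k_p < 0,
\]
which is precisely the claimed inequality. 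This identity also immediately justifies the remark in the statement that the left-hand side equals $2\pi$ times the negative of the hyperbolic area of $S$.

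The main point requiring care — and what I regard as the principal obstacle — is justifying the cone-point version of Gauss--Bonnet in the singular setting, rather than the routine algebra that follows. Concretely, one must verify that integrating the geodesic curvature over the boundaries of a triangulated neighborhood of each cone point produces exactly the angle defect $2\pi - \theta_p$ as a discrete contribution. The cleanest way to handle this is to apply the classical Gauss--Bonnet theorem to each geodesic triangle of the triangulation defining $\sigma$: each triangle $T_i$ contributes $\int_{T_i} K\,dA = -\mathrm{Area}(T_i)$ plus boundary and vertex terms, and summing over all triangles causes the interior edge contributions to cancel in pairs (by orientation) while the vertex angles regroup around each vertex. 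At regular vertices the angles sum to $2\pi$, while at a cone point they sum to the cone angle $\theta_p$; the bookkeeping of these angle sums against the topological count furnished by the triangulation is exactly what produces the Euler characteristic on the right and the defect terms on the left. Once this combinatorial accounting is set up, the inequality follows immediately from the positivity of area.
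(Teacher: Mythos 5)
Your proof is correct and follows exactly the route the paper intends: the paper offers no separate argument beyond invoking Gauss--Bonnet and noting that $2\pi\bigl(\chi(S)+\sum_p k_p\bigr)$ equals the negative of the hyperbolic area, which is precisely the identity you derive. Your additional triangulation-by-triangulation justification of the singular Gauss--Bonnet formula (summing $\mathrm{Area}(T)=\pi-(\text{angle sum})$ over the geodesic triangles, with edge cancellation and angle regrouping at vertices) simply fills in the detail the paper leaves implicit.
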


\subsection{Holonomy representation} Let $\widetilde{S}$ be the universal cover of $S$ and let $\pi:\widetilde{S}\longrightarrow S$ be the covering projection. A branched hyperbolic structure  $\sigma$ on $S$ can be lifted to a branched hyperbolic structure $\widetilde{\sigma}$ on the universal cover $\widetilde{S}$.

\begin{defn} Let $\sigma$ be a branched hyperbolic structure on $S$ and $\widetilde{\sigma}$ the lifted branched hyperbolic structure on $\widetilde{S}$. A \emph{developing map} $\dev_\sigma:\widetilde{S}\longrightarrow \hyp^2$ for $\sigma$ is a smooth orientation-preserving map, with isolated critical points and such that its restriction to any simplex on $\widetilde{S}$ is an isometry.
\end{defn}

\noindent Developing maps always exist, and are essentially unique; that is two developing maps for a given structure $\sigma$ differ by post-composition with a M\"obius transformation.\\
\noindent Roughly speaking, a developing map gives a way to read the geometry of $\sigma$ on the hyperbolic plane; since $\pslr$ is the group of orientation preserving isometries for $\hyp^2$, any element acts on the hyperbolic plane without changing the information encoded by the developed image. 

\begin{rmk}
For branched hyperbolic structures the developing map $\dev$ turns out to be a branched map. Branch points are given by cone points of the branched hyperbolic structure $\widetilde{\sigma}$ on $\widetilde{S}$. Around them the developing map fails to be a local homeomorphism and the local degree is \textsf{ord}$(p)+1$, where \textsf{ord}$(p)$  is the order of the cone point.
\end{rmk}

\noindent The developing map $\textsf{dev}:\widetilde{S}\longrightarrow \hyp^2$ of branched hyperbolic structure $\sigma$ has also an equivariance property with respect to the action of $\pi_1S$ on $\widetilde{S}$. For any element $\gamma$, the composition map $\dev\circ \gamma$ is another developing map for $\sigma$. Thus, there exists an element $g\in\pslr$ such that 
\[ g\circ\dev_\sigma =\dev_\sigma\circ \gamma
\] The map $\gamma\longmapsto g$ defines a homomorphism $\rho:\pi_1S\longrightarrow \pslr$ which is called \emph{holonomy representation}. The representation $\rho$ depends on the choice of the developing map, however different choices produce conjugated representations. Hence, it makes sense to consider the conjugacy class of $\rho$, which is usually called \emph{holonomy for the structure}.\\
\noindent Although any branched hyperbolic structure $\sigma$ on a $2-$manifold $S$ induces a holonomy representation by standard argument; the reverse problem to recover a hyperbolic geometry starting from a given representation $\rho$ is more arduous and not always possible. Indeed, we know explicit examples of representations that do not arise as the holonomy of a branched hyperbolic structure (see also \ref{me}). Hence, the following definition makes sense.

\begin{defn}
A representation $\rho:\pi_1S \longrightarrow \pslr$ is said to be \emph{geometrisable by branched hyperbolic structure} if it is the holonomy of a branched hyperbolic structure $\sigma$ on $S$. Equivalently a representation is geometrisable if there exists a possibly branched developing map $\textsf{dev}:\widetilde{S}\longrightarrow \hyp^2$ which is $\rho$-equivariant. 
\end{defn}

\subsection{Euler class of representation} Throughout this subsection, $S$ will be a closed surface of genus at least $2$ unless otherwise specified. For every representation $\rho:\pi_1S\longrightarrow \pslr$ we may naturally associate a $\rp-$bundle $\mathcal{F}_\rho$ over $S$ equipped with a flat connection. Explicitly $\mathcal{F}_\rho$ is obtained as the quotient of $\widetilde{S}\times\rp$ by the diagonal action of $\pi_1S$; \emph{i.e.} for any $\gamma\in\pi_1S$ and $(p,z)\in \widetilde{S}\times \rp$ we have $\gamma\cdot (p,z)=\big(\gamma.p, \rho(\gamma)(z)\big)$. The Euler class $e(\rho)$ of $\rho$ arises naturally as an obstruction to finding global sections of this bundle.\\

\noindent  Let $\tau$ be a topological triangulation, then a section $s_0$ can be easily found on the $0-$skeleton choosing an element of $\rp$ above every vertex. This section can be extended to a section $s_1$ over the $1-$skeleton joining the $0-$sections by paths of $\rp$ elements. Since $\pi_1(\rp)=\Bbb Z$ there are infinitely many extensions of $s_0$ up to homotopy. Over any $2-$cell $T$, the section over the $1-$skeleton defines a $\rp-$vector field along $\partial T$, hence a map $\mathfrak{s}_T:\partial T\longrightarrow \rp$ of degree $d_T$ that corresponds to the number of times the vector field spins along $\partial T$. We may assign to every $2-$cell the integer $d_T$ giving a $2-$cochain $e(\rho)\in H^2(S,\Z)$.\\
\noindent  In determining $e(\rho)$ we made different choices as the triangulation $\tau$ and the $1-$section over the $1-$skeleton. Adjustment by a $2-$coboundary corresponds to altering the amount of spin chosen along each particular edge. Hence, the cohomology class of this $2-$cochain does not depend on the choice of $1-$section. Moreover, it can be seen that this cohomology class does not depend on the cellular decomposition of our surface $S$. Thus $e(\rho)$ is a well-defined $2-$cocycle called \emph{Euler class of} $\rho$ of $\mathcal{F}_\rho$.
\noindent Since $H^2(S,\Z)\cong \Z$ we can associate to $e(\rho)$ the integer $\eu\rho$ using the Kronecker pairing. We define $\eu\rho$ as the \emph{Euler number} associated to $\rho$. 

\begin{lem}\label{L321}
The Euler number satisfies the following equality 
\[ \eu\rho=\sum_{T\in \tau} d_T.
\]
\end{lem}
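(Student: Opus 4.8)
The plan is to unwind the definition of the Euler number as the evaluation of the cocycle $e(\rho)$ against the fundamental class of $S$, and then to observe that this evaluation is computed cell by cell. Recall that under the identification $H^2(S,\Z)\cong\Z$ the integer $\eu\rho$ is obtained from the class $e(\rho)$ via the Kronecker pairing with the fundamental class $[S]\in H_2(S,\Z)$; that is, $\eu\rho=\langle e(\rho),[S]\rangle$.

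First I would represent the fundamental class at the chain level. Since $S$ is a closed, connected and orientable surface and $\tau$ is a topological triangulation, the fundamental class $[S]$ is represented in cellular homology by the $2$-cycle $\sum_{T\in\tau}T$, where each $2$-cell $T$ is oriented compatibly with the fixed orientation of $S$. This is exactly the statement that, for a closed oriented surface, the top cellular chain $\sum_T T$ of coherently oriented top cells is a cycle and realises the fundamental class.

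Next I would recall that the cochain $e(\rho)$ was constructed precisely so that its value on an oriented $2$-cell $T$ is the integer $d_T$, namely the degree of the map $\mathfrak{s}_T:\partial T\longrightarrow\rp$ measuring how many times the section spins along $\partial T$. Evaluating a $2$-cochain on the $2$-chain $\sum_T T$ is, by definition and bilinearity of the pairing, the sum of its values on the individual cells. Hence
\[ \eu\rho=\langle e(\rho),[S]\rangle=\sum_{T\in\tau} e(\rho)(T)=\sum_{T\in\tau}d_T, \]
which is the desired equality.

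The only genuine point of care — and where I expect to spend the most attention — is the orientation bookkeeping. One must ensure that the boundary $\partial T$ appearing in the definition of $d_T$ carries the boundary orientation induced by the chosen orientation of $T$, and that these cell orientations are globally coherent, so that $\sum_T T$ is a genuine cycle representing $[S]$ and not merely a chain. Once the orientation convention used for $\mathfrak{s}_T$ is matched with the convention fixing the fundamental class, no sign discrepancies arise and the statement reduces to translating the Kronecker pairing into a sum over the top cells.
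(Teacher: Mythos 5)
Your proposal is correct and follows essentially the same argument as the paper: write the fundamental class as the sum of the (coherently oriented) $2$-cells, $[S]=[T_1]+\cdots+[T_n]$, then evaluate $e(\rho)$ cell by cell via the Kronecker pairing, using that $e(\rho)$ takes the value $d_T$ on each cell $T$ by construction. Your extra attention to the orientation conventions is a point the paper passes over silently, but it does not change the route of the proof.
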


\proof
Let $[S]$ be the fundamental class of $S$, that is a generator of $H_2(S,\Z)$. Now $[S]=[T_1]+\cdots+[T_n]$, because $S$ is closed, that is compact without boundary; then 
\[ \eu\rho=e(\rho)[S]=\sum_{T\in\tau} e(\rho)[T]=\sum_{T\in\tau} d_T
\] where the last equality holds by definition of $e(\rho)$.
\endproof

\noindent Now suppose $\rho$ is a geometrisable representation, that is $\rho$ is the holonomy of a branched hyperbolic structure on $S$. Let $p_1,\dots,p_n$ be the cone points of orders $k_1,\dots, k_n$, respectively. The following formula relates the Euler number of $\rho$ with the Euler characteristic and the orders of the cone points.

\begin{prop}\label{P324}
Let $\rho:\pi_1S\longrightarrow \pslr$ be a representation which is the holonomy of a branched hyperbolic structure on a closed surface $S$. Then Euler number satisfies the identity
\[ \mathcal{E}(\rho)=\pm\bigg(\chi(S)+\sum_{i=1}^n k_i\bigg)
\] where the sign depends on the orientation of $S$.
\end{prop}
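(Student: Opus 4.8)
The plan is to exhibit an explicit section of the flat bundle $\mathcal{F}_\rho$ with isolated singularities and to identify $\eu\rho$ with the total index of this section, in the spirit of Lemma \ref{L321} and ordinary obstruction theory. Recall that $\partial\hyp^2$ is canonically identified with $\rp$ and that the fibre of $\mathcal{F}_\rho$ is exactly this boundary circle; since $\pslr$ acts on $\partial\hyp^2$ preserving orientation, $\mathcal{F}_\rho$ is an oriented circle bundle and $\eu\rho$ equals, up to an overall sign fixed by the orientation of $S$, the total index of any section defined off finitely many points.

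First I would fix a smooth tangent vector field $V$ on $S$ with isolated zeros, chosen so that none of its zeros is a cone point; by the Poincar\'e--Hopf theorem $\sum_x \mathrm{ind}_x(V)=\chi(S)$. Lifting $V$ to a $\pi_1S$-invariant field $\widetilde V$ on $\widetilde S$ and using the endpoint map $\mathrm{end}\colon T^1\hyp^2\to\partial\hyp^2$ (sending a unit tangent vector to the forward endpoint of the geodesic it spans), I would define
\[ s(x)=\mathrm{end}\big(\dev_*\widetilde V(x)\big)\in\rp. \]
Because $\dev$ is $\rho$-equivariant and $\mathrm{end}$ is $\pslr$-equivariant, $s$ satisfies $s(\gamma x)=\rho(\gamma)s(x)$, hence descends to a section of $\mathcal{F}_\rho$. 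This section is defined away from the zeros of $V$ and the branch points of $\dev$, that is the cone points $p_1,\dots,p_n$, which together form a finite set.

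It then remains to compute the local index of $s$ at each singular point. Away from the cone points $\dev$ is an orientation-preserving local diffeomorphism, so it preserves the index of $V$, and since $\mathrm{end}$ is a fibrewise diffeomorphism each zero of $V$ contributes its Poincar\'e--Hopf index, for a total of $\chi(S)$. At a cone point $p_i$ of order $k_i$ the developing map has local degree $k_i+1$, so in suitable conformal coordinates it reads $w=z^{k_i+1}$; pushing forward a nonvanishing $V\equiv\partial_x$ gives $\dev_*V\propto (k_i+1)z^{k_i}$, whose direction winds $k_i$ times as $z$ encircles the branch point once. Summing all contributions yields $\eu\rho=\pm\big(\chi(S)+\sum_{i=1}^n k_i\big)$, the sign being dictated by the compatibility of the orientation of $S$ with that of $\mathcal{F}_\rho$. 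As a consistency check, Proposition \ref{gbcon} identifies $\chi(S)+\sum_i k_i$ with $-\tfrac{1}{2\pi}$ times the hyperbolic area, so the formula is equivalent to the classical relation between Euler number and area.

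The Poincar\'e--Hopf bookkeeping is routine; the main obstacle is the rigorous identification of the total index of $s$ with $\eu\rho$ --- that is, matching the obstruction-theoretic count with the cochain $\sum_T d_T$ of Lemma \ref{L321} --- together with the careful local analysis at the branch points, where one must verify that the motion of the base point $\dev(z)$ around $\dev(p_i)$ does not alter the winding of the boundary-valued section. I would handle the latter by trivialising $T^1\hyp^2$ over a small disc and observing that, in this trivialisation, $\mathrm{end}$ is homotopic to the projection onto the direction factor, so that only the $k_i$-fold winding of $\dev_*V$ survives in the limit as the loop shrinks to $p_i$.
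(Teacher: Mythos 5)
Your proof is correct, but it follows a genuinely different route from the paper's. The paper reproduces Mathews' argument: it takes a geodesic triangulation of the cone surface with every cone point among the vertices and uses an $\rp$-field adapted to it, with singularities of index $1+k_i$ at the vertices, $-1$ at the edges and $+1$ at the faces; the Poincar\'e--Hopf theorem then gives the total $\chi(S)+\sum_i k_i$ directly from the combinatorics, and after perturbing the singularities off the $1$-skeleton the spins $d_T$ are matched with Lemma \ref{L321}. You instead build an explicit section of $\mathcal{F}_\rho$ in the style of Goldman's classical argument for Fuchsian holonomy: push a generic tangent vector field forward by $\dev$ and compose with the endpoint map $T^1\hyp^2\to\partial\hyp^2\cong\rp$, so that the topological and geometric contributions are separated --- the zeros of $V$ account for $\chi(S)$ by Poincar\'e--Hopf, while each cone point accounts for $k_i$ via the local degree $k_i+1$ of the developing map. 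Your treatment of the basepoint-motion issue (that $\dev(z)$ winds $k_i+1$ times around $\dev(p_i)$ while only the winding of the direction should survive) by the homotopy $\mathrm{end}\simeq\mathrm{end}(q_0,\cdot)\circ\mathrm{pr}$ over a small disc is exactly the right fix, since homotopic maps $D\times S^1\to\rp$ assign the same degree to every loop. The step you flag as the ``main obstacle'' --- identifying the total index of a section defined off finitely many points with the cochain count $\sum_T d_T$ of Lemma \ref{L321} --- is invoked by the paper at precisely the same level of detail (choose the cell structure so that the singularities lie inside the $2$-cells; then $d_T$ is the sum of the indices inside $T$), so your argument is complete to the same standard as the paper's. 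What each approach buys: the paper's field is already adapted to a triangulation, so the comparison with Lemma \ref{L321} is immediate and no analysis of the endpoint map is needed; yours makes the flat structure and the boundary circle explicit, works with an arbitrary vector field rather than a triangulation, and specializes, when there are no cone points, to the classical proof that Fuchsian representations have Euler number $\pm\chi(S)$.
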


\begin{proof}
Among different proofs in literature we use the following argument of Mathews \cite{MA2}. Let $\tau$ be a hyperbolic triangulation, such that every cone point is a vertex of the triangulation, so we have a simplicial decomposition of $S$ with hyperbolic triangles. There is a $\rp-$vector field $V$ on $S$ with one singularity for every vertex, edge and face of $S$. The orders of the singularities are $1+k_i$ at any vertex (remember that for regular points $k=0$), $-1$ on every edge, and $1$ on every face. By the Hopf-Poincar\'e theorem the sum of the indices of the singularities equals $\chi(S)+\sum k_i$. \\
\noindent Now perturb the vector field so that the singularities lie off the $1-$skeleton. Then, the number of times the vector field spins around a triangle $T\in\tau$ is equal to the sum of the indices of singular points of $V$ inside $T$, or its negative, depending on whether the orientation induced by $\dev$ agrees with the orientation induced by the fundamental class $[S]$. For now assume these orientations agree; otherwise all the cohomology classes must be multiplied by $-1$. Hence, the spin of $V$ around any triangle $T\in\tau$ is equal to the sum of indices of singular points of $V$ inside $T$ which is in turn equal to the degree of the map $\mathfrak{s}_T:\partial T\longrightarrow \rp$ defined above. By \ref{L321} the sum of all indices of singular points is equal to $\eu\rho$, hence
\[ \eu\rho=\pm\Big(\chi(S)+\sum_{i=1}^n k_i\Big). \qedhere
\]
\end{proof}

\subsection{Some examples} Before continuing we report here examples of branched hyperbolic structures which motivate this work. In the first one we show how to obtain a $2-$dimensional branched hyperbolic structure on a closed surface $S$ by gluing the sides of a regular polygon.

\begin{ex}\label{ex1}
Let $S$ be the surface of genus $g$ by gluing the sides of a $4g$-gon with the usual labelling $a_1b_1a_1^{-1}b_1^{-1}\dots a_gb_ga_g^{-1}b_g^{-1}$. In the hyperbolic plane there are infinitely many non-isometric regular $4g-$gons, the angles on each vertex have the same value strictly between $0$ and the Euclidean one $\frac{2g\pi}{2g+1}$. Therefore we obtain a branched hyperbolic structure with only one cone point of angle strictly between $0$ and $(4g-2)\pi$; in particular we obtain a hyperbolic cone-surface in genus $g$ with one cone point of angle $2k\pi $ for any integer $1\leq k \leq 2g-2$, i.e. hyperbolic cone-surface with one cone point of order $k$ for any integer $0\leq k \leq 2g-3$.  For instance in case of $g=2$ we have the complete hyperbolic structure coming from the regular octagon of angle $\frac{\pi}{4}$, and a hyperbolic cone-surface with a cone point of angle $4\pi$ (i.e. a cone point of order $k=1$) coming from the right-angled octagon.
\end{ex}

\noindent The following example will be shown in details in the sequel; see \ref{L453}.

\begin{ex}\label{ex2} Let $\Sigma$ be a surface of genus $2$ with a complete hyperbolic structure $\sigma_0$ with Fuchsian holonomy $\rho_0$. Let $S$ be a topological surface of genus $g\ge 3$ and $f:S\longrightarrow \Sigma$ a covering map of surfaces. If the map $f$ is a genuine covering map, the structure $\sigma_0$ can be pulled back to a complete hyperbolic structure with Fuchsian holonomy $\rho$. However, if the map $f$ branchs at some points, the pull-back turns to be a branched hyperbolic structure $\sigma$ on $S$ whose cone points correspond to ramification points of $f$. In this case, the holonomy $\rho$ of $\sigma$ is a discrete but non-faithful representation of $\pi_1S$. In particular the image of $\rho$ consists only of hyperbolic transformations along with the identity because $\rho=\rho_0\circ f_*$.
\end{ex}

\begin{ex}\label{ex3} Let $S$ be a closed surface of genus $g$ with a complete hyperbolic structure $\sigma_0$ and holonomy $\rho_0$. Consider a geodesic segment of length $l$ on $S$ and cut along it to get a new surface homotopically equivalent to $S$ with an open disc removed. Geometrically the new surface inherits the branched hyperbolic structure  coming from $S$ and has a piecewise geodesic boundary $\gamma$.
Take two copies $S_1$ and $S_2$ of the new surface and glue the resulting surfaces as in  figure \ref{gp} to get a closed surface of genus $2g$ endowed with a branched hyperbolic structure $\sigma$. The holonomy $\rho$ of $\sigma$ is given by $ \rho:\pi_1S \ast_{\langle \gamma\rangle} \pi_1S\longrightarrow \pslr$. The image of $\rho$ coincides with the image of $\rho_0$, hence the representation is discrete because its image is, but not faithful.
\end{ex}
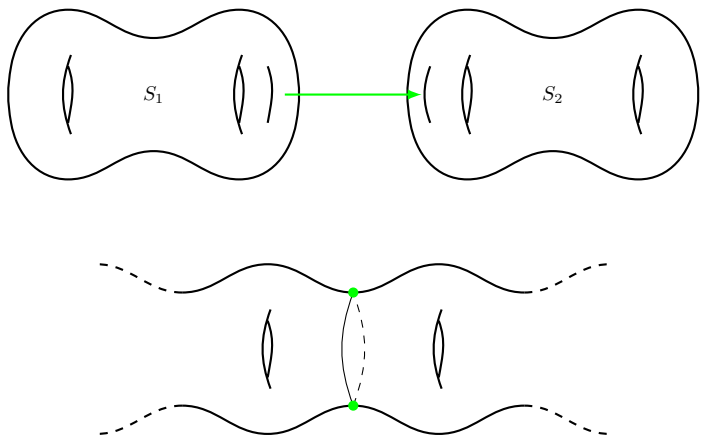
\begin{figure}[!h]
\begin{center}
\begin{tikzpicture}[thick,scale=0.75, every node/.style={scale=0.75}]

\draw [ thick] 
(3.5,6) to [out=180, in=280] (2.5,7)
(2.5,7) to [out=100, in=260] (2.5,8)
(2.5,8) to [out=80, in=180] (3.5,9)
(3.5,9) to [out=0, in=180] (5,8.5)
(5,8.5) to [out=0, in=180] (6.5,9)
(6.5,9) to [out=0, in=100] (7.5,8)
(7.5,8) to [out=280, in=80] (7.5,7)
(7.5,7) to [out=260, in=0] (6.5,6)
(6.5,6) to [out=180, in=0] (5,6.5)
(5,6.5) to [out=180, in=0] (3.5,6);

\draw [thick] 
(3.55,6.8) to [out=110, in=250] (3.55,8.2)
(3.5,7) to [out=80, in=290] (3.5,8);

\draw [thick] 
(6.55,6.8) to [out=110, in=250] (6.55,8.2)
(6.5,7) to [out=80, in=290] (6.5,8);

\draw [thick] 
(7,7) to [out=80, in=290] (7,8);

\draw [ thick] 
(10.5,9) to [out=0, in=180] (12,8.5)
(12,8.5) to [out=0, in=180] (13.5,9)
(13.5,9) to [out=0, in=100] (14.5,8)
(14.5,8) to [out=280, in=80] (14.5,7)
(14.5,7) to [out=260, in=0] (13.5,6)
(13.5,6) to [out=180, in=0] (12,6.5)
(12,6.5) to [out=180, in=0] (10.5,6)
(10.5,6) to [out=180, in=280] (9.5,7)
(9.5,7) to [out=100, in=260] (9.5,8)
(9.5,8) to [out=80, in=180] (10.5,9);

\draw [thick] 
(10.55,6.8) to [out=110, in=250] (10.55,8.2)
(10.5,7) to [out=80, in=290] (10.5,8);

\draw [thick] 
(13.55,6.8) to [out=110, in=250] (13.55,8.2)
(13.5,7) to [out=80, in=290] (13.5,8);

\draw [thick] 
(9.85,7) to [out=110, in=250] (9.85,8);

\node at (5,7.5) {$S_1$};
\node at (12,7.5) {$S_2$};

\draw [ green, latex-, thick] (9.7,7.5) to (7.3,7.5);

\draw [ thick]
(10,1.5) to [out=0, in=180] (11.5,2)
(8.5,2) to [out=0, in=180] (10,1.5)
(7,1.5) to [out=0, in=180] (8.5,2)
(5.5,2) to [out=0, in=180] (7,1.5)
(11.5,4) to [out=180, in=0] (10,4.5)
(10,4.5) to [out=180, in=0] (8.5,4)
(8.5,4) to [out=180, in=0] (7,4.5)
(7,4.5) to [out=180, in=0] (5.5,4);

\draw [thick, dashed]
(11.5,2) to [out=0, in=180] (13,1.5)
(11.5,4) to [out=0, in=180] (13,4.5)
(5.5,4) to [out=180, in=0] (4,4.5)
(5.5,2) to [out=180, in=0] (4,1.5);

\draw [thick] 
(7.05,2.3) to [out=110, in=250] (7.05,3.7)
(7,2.5) to [out=80, in=290] (7,3.5);

\draw [thick] 
(10.05,2.3) to [out=110, in=250] (10.05,3.7)
(10,2.5) to [out=80, in=290] (10,3.5);

\draw[thin,  black]
(8.5,2) to [out=110, in=250] (8.5,4);
\draw[thin, dashed,  black]
(8.5,2) to [out=70, in=290] (8.5,4);

\draw [ green] plot [mark=*, smooth] coordinates {(8.5,2)};
\draw [ green] plot [mark=*, smooth] coordinates {(8.5,4)};
\end{tikzpicture}
\end{center}
\caption[Gluing process]{We cut the surfaces along their slits and then we glue them isometrically identifying the cone-points.}
\label{gp}
\end{figure}

\section{Purely hyperbolic representations}\label{s2}

\noindent We are going to introduce a particular type of representations, namely \emph{purely hyperbolic representations.} Of course Fuchsian representations are purely hyperbolic, but also some non-Fuchsian representations are; in the previous examples \ref{ex2} and \ref{ex3} we found examples of such representations. From now on we will deal with surfaces of genus $g\ge2$. Motivated by the example \ref{ex2} we give the following definition.

\begin{defn}\label{phrdefn}
We will say that a representation $\rho:\pi_1S\longrightarrow \pslr$ is \emph{purely hyperbolic} if its image consists only of hyperbolic elements along with the identity. 
\end{defn}

\noindent We may wonder if purely hyperbolic representations arise as the holonomy of a branched hyperbolic structure. The Fuchsian case is well-known in literature, indeed Goldman's theorem \cite[Corollary D]{GO88} characterise them completely. On the other hand, in the following subsection we will give examples of purely hyperbolic representations which never arise as the holonomy of a branched hyperbolic structure.\\

\noindent Definition \ref{phrdefn} includes elementary representations. A representation $\rho$ is called \emph{elementary} if its limit set of has at most two points. In particular, the limit set of an elementary and purely hyperbolic representation has exactly two points. This implies that all elements of $\rho\big(\pi_1S\big)$ act on $\hyp^2$ leaving fixed the same axis; namely the unique axis having the points of the limit set as points at infinity. Hence, all the elements commute to one another and $\rho\big(\pi_1S\big)$ is abelian. It is well-known that abelian representations have zero Euler number (see \cite{GO88} for instance), thus no elementary and purely hyperbolic representation arises as the holonomy of a branched hyperbolic structure by \ref{gbcon}. For this reason, we can rule out elementary and purely hyperbolic representations from our discussion.\\

\noindent We recall, for the reader convenience, that a representation $\rho:\pi_1S\longrightarrow \pslr$ is said to be \emph{discrete} if its image is a discrete subgrop of $\pslr$ (with respect to the induced topology of the Lie group structure). A generic non-elementary and discrete subgroup of $\pslr$ contains hyperbolic elements, but it might contain also parabolic or elliptic elements of finite order (see \cite[Theorem 8.4.1]{B}). More precisely there is the following characterisation.

\begin{prop}\label{P32}
A non-elementary subgroup $\Gamma$ of $\pslr$ is discrete if and only if each elliptic element (if any) has finite order.
\end{prop}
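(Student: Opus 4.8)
The plan is to prove Proposition \ref{P32} by establishing both implications, with the substantive content lying in the reverse direction. For the forward implication, I would argue by contraposition: suppose $\Gamma$ is discrete but contains an elliptic element $g$ of infinite order. An elliptic element of $\pslr$ is conjugate to a rotation about a fixed point in $\hyp^2$, so it corresponds to a rotation by some angle $\theta$. If $g$ has infinite order, then $\theta$ is an irrational multiple of $\pi$, and the cyclic group $\langle g\rangle$ generated by $g$ consists of rotations by $n\theta$ for all $n\in\Z$. Since the angles $\{n\theta \bmod 2\pi\}$ are dense in the circle, the powers $g^n$ accumulate at the identity, contradicting discreteness. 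Thus a discrete subgroup can only contain elliptic elements of finite order.

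The reverse implication is the heart of the matter and will be the main obstacle. Here I assume every elliptic element of the non-elementary group $\Gamma$ has finite order, and I must deduce that $\Gamma$ is discrete. The natural strategy is to appeal to a version of Jørgensen's inequality or, more directly, to the general structure theory of non-elementary subgroups of $\pslr$ recorded in Beardon \cite[Theorem 8.4.1]{B}, which is already cited in the surrounding text. The key classical fact is that a non-elementary subgroup of $\pslr$ is discrete if and only if every cyclic subgroup is discrete, and moreover that non-discreteness of a non-elementary group forces the existence of elements arbitrarily close to the identity, which in the $\pslr$ setting can be shown to produce an elliptic element of infinite order. Concretely, if $\Gamma$ were non-discrete, one would extract a sequence of distinct nontrivial elements $g_n\to \mathrm{id}$; using the non-elementary hypothesis to guarantee sufficiently many independent hyperbolic elements, a commutator or conjugation argument produces a small elliptic element, whose infinite order follows because its rotation angle can be taken arbitrarily small yet nonzero.

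I would organise the reverse direction as follows. First, I would record the trichotomy for elements of $\pslr$ (hyperbolic, parabolic, elliptic) in terms of the trace, so that ellipticity is detected by $|\operatorname{tr}| < 2$ and the rotation angle is read off from the trace. Second, I would invoke the contrapositive: assuming $\Gamma$ is not discrete, the identity is a non-isolated point of $\Gamma$, giving a sequence of distinct elements converging to the identity. Third, the crucial step is to show that such a sequence, combined with the presence of two hyperbolic elements with distinct axes (guaranteed by the non-elementary assumption), yields an elliptic element of infinite order; this is where I would lean on Jørgensen's inequality, which bounds from below the quantity $|\operatorname{tr}^2(g)-4| + |\operatorname{tr}[g,h]-2|$ for any discrete non-elementary $\langle g,h\rangle$, and whose violation in the limit forces an elliptic element whose angle is an irrational multiple of $\pi$. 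The main difficulty is making the passage from ``small elements exist'' to ``an infinite-order elliptic exists'' rigorous rather than merely plausible: one must ensure the limiting element is genuinely elliptic (not parabolic or hyperbolic) and genuinely of infinite order. I expect this to be the technical crux, and it is precisely where the citation to \cite[Theorem 8.4.1]{B} does the heavy lifting, so in the write-up I would isolate that step and attribute the structural input to Beardon, keeping the elementary trace computations self-contained.
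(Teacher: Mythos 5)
Your forward implication is correct and standard: an elliptic element of infinite order is an irrational rotation about its fixed point, its powers are dense in the circle of rotations about that point, hence they accumulate at the identity and discreteness fails. But note that the paper offers no proof of Proposition \ref{P32} at all: it is quoted directly from Beardon \cite[Theorem 8.4.1]{B}, so the only thing your attempt can be compared against is that citation.

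The reverse implication (all elliptics of finite order $\Rightarrow$ $\Gamma$ discrete) is where your proposal has a genuine gap, in fact two. First, the mechanism you sketch does not produce what you need: from non-discreteness you do get distinct $g_n\to\mathrm{id}$, and commutator/conjugation tricks with hyperbolic elements do yield elliptic elements of small rotation angle, but \emph{small nonzero angle does not imply infinite order} --- a rotation by $2\pi/n$ has arbitrarily small angle and order $n$. Likewise, violating J{\o}rgensen's inequality for $\langle g_n,h\rangle$ only tells you that this two-generator group is elementary or non-discrete; it does not force any rotation angle to be an irrational multiple of $\pi$, and nothing in your argument does. Ruling out that \emph{all} the small elliptics so produced have finite order is precisely the hard content of the theorem. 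Second, your fallback is circular: the ``heavy lifting'' you delegate to \cite[Theorem 8.4.1]{B} is exactly the proposition being proved, and the ``key classical fact'' you invoke (a non-elementary subgroup of $\pslr$ is discrete iff every cyclic subgroup is discrete) is a verbatim restatement of it, since $\langle g\rangle$ is non-discrete exactly when $g$ is an elliptic of infinite order. A genuine proof of this direction, as in Beardon's book, is built up through substantially different structure theory: Siegel's theorem that non-elementary purely hyperbolic subgroups of $\pslr$ are discrete, then the case of groups with no elliptic elements, and only then the general criterion. So either one cites Beardon for the whole proposition, as the paper does, or one must reproduce that development; the trace/J{\o}rgensen sketch in between accomplishes neither.
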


\noindent By the previous proposition, we may note that any purely hyperbolic representation $\rho:\pi_1S\longrightarrow \pslr$ is discrete, \emph{i.e.} the image of $\rho$ is a discrete subgroup of $\pslr$. In \cite{GO88}, Goldman shows that faithful and discrete representations are Fuchsian. Hence non Fuchsian, purely hyperbolic representations are discrete and not faithful representations.

\subsection{Main motivating example}\label{me} The following example is a generalisation of Tan's counterexample (see \cite{TA}); which was given for surfaces of genus $3$.\\
\noindent Let $S$ be a genus $g$ surface, obtained by attaching $h$ handles to a surface of genus $g-h$, where $g-h\ge2$. We define a representation $\rho$ in the following way: $\rho$ is discrete and faithful on the original surface, and trivial on each handle we have attached. In this way $\rho(\pi_1S)$ is a discrete subgroup of $\pslr$ and the quotient $\hyp^2/\rho(\pi_1S)$ is a genus $g-h$ surface. However $\rho$ cannot be the holonomy of a branched hyperbolic structure on $S$.\\
\noindent First of all we may notice that $\eu\rho=2+2h-2g$. Suppose now that $S$ admits a branched hyperbolic structure $\sigma$ with holonomy $\rho$, and consider its developing map $\dev_\sigma:\widetilde{S}\longrightarrow \hyp^2$.
Since $\dev_\sigma$ is a $\big(\pi_1S,\rho(\pi_1S)\big)-$equivariant map; it passes down to branch map
\[ f:S\longrightarrow \ql{\rho\big(\pi_1S\big)}{\hyp^2}
\] Consider now the induced map of fundamental groups. This is the same map induced by the map that pinches to a point each handle we have attached before, hence the map $f$ is homotopic to pinching map of degree one. Since any branch cover of degree one is just a homeomorphism we found a contradiction, that is $\rho$ cannot be the holonomy of a branched hyperbolic structure. \\

\noindent So far we have examples of purely hyperbolic representations which are holonomy of a branched hyperbolic structure and examples of purely hyperbolic representations which are not. Hence the following question naturally arises.

\begin{qs}\label{q4}
Let $\rho$ be a non-Fuchsian, purely hyperbolic representation. Under which condition does $\rho$ arise as the holonomy of a branched hyperbolic structure?
\end{qs}

\subsection{A necessary condition}\label{ss32} In order to give an answer to the question \ref{q4}; a necessary condition for a purely hyperbolic representation $\rho:\pi_1S\longrightarrow \pslr$ to arise as the holonomy of a branched hyperbolic structure is the following: the quotient space $\hyp^2/\rho(\pi_1S) =\Sigma$ must be closed (hence compact without boundary); \emph{i.e.} the group $\rho\big(\pi_1S\big)$ is a cocompact subgroup of $\pslr$. More precisely we have the following lemma.

\begin{lem}\label{neccond}
Let $\rho:\pi_1S\longrightarrow \pslr$ be a purely hyperbolic representation. Suppose $\rho$ arises as the holonomy of a branched hyperbolic structure $\sigma$ on $S$; then $\rho\big(\pi_1S\big)$ is a cocompact subgroup of $\pslr$.
\end{lem}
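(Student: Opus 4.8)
The plan is to push the developing map of $\sigma$ down to a surjection of the compact surface $S$ onto $\Sigma := \hyp^2/\rho(\pi_1 S)$, which forces the latter to be compact. First I would record that $\rho$ cannot be elementary: an elementary purely hyperbolic representation is abelian and so has vanishing Euler number, whereas the holonomy of any branched hyperbolic structure has nonzero Euler number by Propositions \ref{gbcon} and \ref{P324}; hence $\rho$ is forced to be non-elementary. Proposition \ref{P32} then guarantees that $\rho(\pi_1 S)$ is discrete. Since hyperbolic isometries have infinite order and no fixed point in $\hyp^2$, the group $\rho(\pi_1 S)$ is torsion-free and acts freely and properly discontinuously on $\hyp^2$; consequently $\Sigma$ is a connected Hausdorff hyperbolic surface and the projection $\pi_\Sigma : \hyp^2 \to \Sigma$ is an open covering map.

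Next I would descend the developing map. By hypothesis there is a $\rho$-equivariant developing map $\dev_\sigma : \widetilde S \to \hyp^2$, satisfying $\dev_\sigma \circ \gamma = \rho(\gamma)\circ \dev_\sigma$ for every $\gamma \in \pi_1 S$; this is exactly the compatibility needed for $\dev_\sigma$ to induce a well-defined map $f : S \to \Sigma$ with $\pi_\Sigma \circ \dev_\sigma = f\circ \pi$, where $\pi : \widetilde S \to S$ is the universal covering. The crucial geometric input is that $\dev_\sigma$ is an \emph{open} map: away from its isolated critical points it is a local isometry, hence a local homeomorphism, while near a branch point it is modelled on $z\mapsto z^{k+1}$ (with $k$ the order of the corresponding cone point), which is again open. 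Because $\pi$ is surjective and both $\pi$ and $\pi_\Sigma$ are open, the identity $f(U)=\pi_\Sigma\big(\dev_\sigma(\pi^{-1}(U))\big)$ shows that $f$ is open too.

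Finally I would invoke the standard open--closed argument. As $S$ is closed it is compact, so $f(S)$ is compact and therefore closed in the Hausdorff space $\Sigma$; since $f$ is open, $f(S)$ is also open; and it is plainly nonempty. Connectedness of $\Sigma$ then gives $f(S)=\Sigma$, so $\Sigma$ is the continuous image of a compact space and is hence compact, i.e. $\rho(\pi_1 S)$ is cocompact. The one genuinely delicate point is the openness of $\dev_\sigma$ at its branch points; once the local model $z\mapsto z^{k+1}$ is established, the remainder is soft topology.
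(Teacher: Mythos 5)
Your proof is correct, but it reaches surjectivity by a genuinely different route from the paper's. Both arguments descend the developing map to a branched map $f\colon S\longrightarrow \Sigma=\hyp^2/\rho(\pi_1S)$ and deduce compactness of $\Sigma$ from surjectivity of $f$; the difference lies in how surjectivity is established. The paper argues via mapping degree: $f$ is a proper, orientation-preserving map between surfaces, a local isometry off the branch points, so at a regular value $q\in\mathrm{Im}(f)$ every local sign $\sign d_pf$ equals $+1$ and the degree is positive, whereas any point outside the image would be a regular value forcing $\deg(f)=0$. You instead prove that $f$ is \emph{open} --- using that $\dev_\sigma$ is a local homeomorphism away from the cone points and is locally modelled on $z\mapsto z^{k+1}$ at them --- and then run the open--closed argument: $f(S)$ is open, compact (hence closed in the Hausdorff space $\Sigma$) and nonempty, so it equals $\Sigma$ by connectedness. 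Your route is more elementary, using only point-set topology in place of degree theory, and it has the additional merit of explicitly verifying non-elementarity before invoking Proposition \ref{P32} (which is stated only for non-elementary subgroups); the paper's proof uses that hypothesis tacitly, relying on the discussion preceding the lemma where elementary purely hyperbolic representations are excluded by the Euler-number obstruction. What the paper's degree-theoretic argument buys in exchange is the degree of $f$ itself, which is precisely the quantity driving the rest of the paper: the degree-one rigidity behind the pinch-map obstruction in Lemma \ref{L456} and the multiplicativity of the Euler number in Lemma \ref{L312}.
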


\proof
By assumption there exists a branched hyperbolic structure $\sigma$ with holonomy $\rho$. Since $\rho$ is purely hyperbolic, the image $\rho(\pi_1S)$ is a discrete subgroup of $\pslr$ by \ref{P32}  and acts freely and properly discontinuously on the hyperbolic plane. Hence the quotient space $\hyp^2/\rho(\pi_1S) =\Sigma$ is a complete hyperbolic surface (in particular connected). It remains to show that $\Sigma$ is compact.  Let $\dev_\sigma:\widetilde{S}\longrightarrow \hyp^2$ be the developing map for $\sigma$; since it is $\big(\pi_1S,\rho(\pi_1S)\big)-$equivariant, it descends to a branched map $f:S\longrightarrow \hyp^2/\rho(\pi_1S)=\Sigma$. We note that $f$ turns out to be a proper orientation preserving map between surfaces. In particular it is a local isometry outside the branch points. According to \cite[Exercise 8.21]{SM} we claim that any proper orientation preserving map $f$ between $2-$surfaces, with at least one regular point, is surjective. Indeed this is a mapping degree matter. We may pick any regular value $q\in \Sigma$ and look at the sum
\[
\deg(f)=\sum_{f(p)=q}\sign\, d_pf,
\]
where the sign is $+1$ if $d_pf$ preserves orientation, $-1$ otherwise. Any $q\notin \textsf{Im}(f)$ is trivially a regular value and the sum is of course null. Since there is some regular value $q\in \textsf{Im}(f)$ with $\sign\, d_pf=+1$ for all $f(p)=q$ then the sum cannot be zero. Hence the conclusion; \emph{i.e.} $\Sigma$ is compact.
\endproof 

\noindent Since we are assuming $\rho$ non-elementary, by \cite[Theorem 5.2.1]{B} the image $\rho\big(\pi_1S\big)$ of $\rho$ is a Fuchsian group and the invariant set for the action of this group is the entire hyperbolic plane. 

\begin{rmk}\label{cceqfk} Let $\rho:\pi_1S\longrightarrow \pslr$ be a non-elementary and purely hyperbolic representation. We notice that $\rho\big(\pi_1S\big)$ is a cocompact subgroup if it is a Fuchsian group of the first kind, \emph{i.e.} the limit set is the entire circle at infinity. Indeed, the group $\pi_1S$ is finitely generated, hence also its image $\rho\big(\pi_1S\big)$ is. By \cite[Theorem 4.6.1]{KA} the group $\rho\big(\pi_1S\big)$ is geometrically finite; \emph{i.e.} there exists a convex fundamental region for $\rho\big(\pi_1S\big)$ with finitely many sides. 
Suppose $\rho\big(\pi_1S\big)$ is a cocompact subgroup, \emph{i.e.} $\hyp^2/\rho(\pi_1S) =\Sigma$ is compact, by \cite[Corollary 4.2.3]{KA} there is a compact fundamental region, then with finite hyperbolic area. By \cite[Corollary 4.5.2]{KA}, $\rho\big(\pi_1S\big)$ is of the first kind.
\end{rmk}

\noindent By Lemma \ref{neccond} and Remark \ref{cceqfk}, from now on we will deal only with non-elementary and purely hyperbolic representations $\rho$ such that $\rho\big(\pi_1S\big)$ is cocompact.

\subsection{Main result}\label{ss33} In this subsection we give a complete characterisation of those purely hyperbolic representations that arise as the holonomy of a branched hyperbolic structure. Some preliminaries are in order.\\
\noindent Let $\rho:\pi_1S\longrightarrow \pslr$ be a purely hyperbolic representation; by definition its image contains only hyperbolic elements along with the identity. By the discussion of the previous section \ref{ss32}, from now on we assume that $\rho(\pi_1S)$ is a purely hyperbolic cocompact subgroup of $\pslr$; this implies in particular that $\rho(\pi_1S)$ is a Fuchsian subgroup of the first kind by \ref{cceqfk}. In particular it is a discrete subgroup by proposition \ref{P32} and it acts freely and properly discontinuous on the hyperbolic plane. The following lemma holds.

\begin{lem}
The quotient space $\hyp^2/\rho(\pi_1S) =\Sigma$ is a complete hyperbolic closed surface with Fuchsian holonomy representation $\rho_0:\pi_1\Sigma\longrightarrow \pslr$. 
\end{lem}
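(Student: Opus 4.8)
The plan is to read off the conclusion from standard covering-space theory applied to the hypotheses already assembled in subsection \ref{ss32}, and then to identify the holonomy of the quotient structure with the defining inclusion. First I would observe that, since $\rho(\pi_1S)$ is discrete and acts freely and properly discontinuously on $\hyp^2$ by isometries, the quotient projection $\pi:\hyp^2\longrightarrow \Sigma=\hyp^2/\rho(\pi_1S)$ is a Riemannian covering map. Because $\hyp^2$ is simply connected, $\pi$ is the universal covering and its deck transformation group is canonically isomorphic to $\rho(\pi_1S)$; in particular $\pi_1\Sigma\cong\rho(\pi_1S)$. Since $\pi$ is a local isometry, $\Sigma$ inherits a hyperbolic metric, and since the action is free the quotient is a $2$-manifold without boundary.

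Next I would establish compactness, from which completeness follows automatically. Compactness is precisely the standing hypothesis that $\rho(\pi_1S)$ is cocompact, guaranteed by Lemma \ref{neccond} together with Remark \ref{cceqfk}, which by definition means $\Sigma=\hyp^2/\rho(\pi_1S)$ is compact. A compact surface without boundary is closed, and a compact Riemannian manifold is geodesically complete by the Hopf--Rinow theorem; hence $\Sigma$ is a complete hyperbolic closed surface, establishing the first assertion.

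Finally I would identify the holonomy. For a complete hyperbolic structure presented as $\hyp^2/\Gamma$ one may take the developing map to be the identity $\hyp^2\longrightarrow\hyp^2$; the associated holonomy representation $\rho_0:\pi_1\Sigma\longrightarrow\pslr$ is then exactly the inclusion $\Gamma\hookrightarrow\pslr$ under the identification $\pi_1\Sigma\cong\Gamma=\rho(\pi_1S)$. This representation is discrete by hypothesis, and it is faithful because the action is free, so no nontrivial deck transformation acts as the identity of $\hyp^2$. A discrete and faithful representation into $\pslr$ is, by the convention recalled earlier (after \cite{GO88}), Fuchsian, which completes the argument.

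I expect the only genuinely delicate point to be the identification of $\rho_0$ with the defining inclusion: one must check that the developing map of the quotient structure can be chosen to be the identity, and that its $\rho_0$-equivariance then recovers the inclusion itself rather than some conjugate of it. Everything else is a direct application of covering-space theory and of the structure theory of Fuchsian groups already assembled in the preceding subsection, so the lemma is in essence a repackaging of those facts.
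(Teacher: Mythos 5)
Your proof is correct and follows essentially the same route as the paper, which states this lemma without a separate proof, treating it as an immediate consequence of the preceding discussion: discreteness of $\rho(\pi_1S)$ via Proposition \ref{P32}, the free and properly discontinuous action on $\hyp^2$ (purely hyperbolic means no elliptic or parabolic elements, so no fixed points in $\hyp^2$), and the standing cocompactness assumption adopted after Lemma \ref{neccond} and Remark \ref{cceqfk}. Your covering-space argument, including the identification of $\pi_1\Sigma$ with the deck group and of the holonomy $\rho_0$ with the inclusion $\rho(\pi_1S)\hookrightarrow\pslr$ via the identity developing map (discrete and faithful, hence Fuchsian in the sense of \cite{GO88}), simply makes explicit the standard facts the paper takes for granted.
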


\noindent Notice that $\rho$ and $\rho_0$ have the same image, hence there exists a map $f_*:\pi_1S\longrightarrow \pi_1\Sigma$ such that $\rho=\rho_0\circ f_*$.  Now surfaces are $K(\pi,1)$-spaces, thus any map between them is uniquely determined up to homotopy by the induced map between the fundamental groups. Thus there exists a map $f:S\longrightarrow \Sigma$ such that the induced map between fundamental groups coincides with $f_*$. That is we have shown the following proposition.

\begin{prop}\label{L033}
Let $\rho:\pi_1S\longrightarrow \pslr$ be a non-Fuchsian purely hyperbolic representation. Then there exists a closed surface $\Sigma$ of genus lower than $S$, a Fuchsian representation $\rho_0:\pi_1\Sigma\longrightarrow \pslr$ and a map $f:S\longrightarrow \Sigma$ such that
$\rho=\rho_0\circ f_*$.
\end{prop}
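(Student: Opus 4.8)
The plan is to assemble Proposition \ref{L033} from the pieces already established, treating it essentially as a packaging of the preceding lemma together with two standard facts about surfaces. First I would recall that, by the immediately preceding lemma, the hypotheses (non-elementary, purely hyperbolic, cocompact) guarantee that $\Sigma = \hyp^2/\rho(\pi_1 S)$ is a closed hyperbolic surface carrying a Fuchsian holonomy $\rho_0:\pi_1\Sigma \longrightarrow \pslr$, whose image is precisely $\rho(\pi_1 S)$ by construction. The key algebraic observation is that $\rho$ and $\rho_0$ have the same image in $\pslr$. Since $\rho_0$ is Fuchsian, it is injective, so $\rho_0$ restricts to an isomorphism onto its image; composing $\rho$ with the inverse of this isomorphism yields a group homomorphism $f_*:\pi_1 S \longrightarrow \pi_1\Sigma$ satisfying $\rho = \rho_0\circ f_*$ by definition.

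Next I would promote the algebraic map $f_*$ to a topological map $f:S\longrightarrow \Sigma$. Here the essential input is that closed surfaces of genus at least $1$ are aspherical, i.e. Eilenberg--MacLane spaces $K(\pi,1)$, because their universal cover is $\hyp^2$ (or $\R^2$), which is contractible. A standard fact from obstruction theory then says that for a $K(\pi,1)$ target, any homomorphism $\pi_1 S \longrightarrow \pi_1\Sigma$ is realised by a continuous map $S\longrightarrow \Sigma$, unique up to homotopy, inducing the prescribed map on fundamental groups (up to the usual conjugacy ambiguity from basepoints). Applying this with the homomorphism $f_*$ produces the desired map $f$ with $f_\# = f_*$ on $\pi_1$, and hence $\rho = \rho_0\circ f_*$ holds at the level of holonomies.

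It remains to check the genus inequality asserted in the statement, namely that $\Sigma$ has genus strictly lower than $S$. Because $\rho$ is non-Fuchsian, the homomorphism $f_*$ cannot be an isomorphism: were it injective, $\rho = \rho_0\circ f_*$ would itself be injective, and a faithful discrete representation is Fuchsian by Goldman's theorem cited in subsection \ref{ss32}, contradicting our hypothesis. Thus $f_*$ has nontrivial kernel, so $S$ must have strictly more topology than $\Sigma$; in particular the genus of $\Sigma$ is strictly smaller than that of $S$. I would phrase this comparison through the mapping degree or through the Euler characteristic, but morally it reduces to the fact that a proper non-injective surjection $\pi_1 S \twoheadrightarrow \pi_1\Sigma$ onto a closed surface group forces $g(\Sigma) < g(S)$.

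The main obstacle, such as it is, lies not in any single step but in being careful about basepoints and conjugacy: the holonomy is only well-defined up to conjugation, and the map $f_*$ is only canonical up to an inner automorphism, so I would fix a developing map and a basepoint once and for all to pin down honest (not merely conjugacy-class) equalities, then note that passing to the topological map $f$ via the $K(\pi,1)$ property loses nothing because $f$ is determined up to free homotopy anyway. Everything else is a direct invocation of results already in place — the preceding lemma for the existence and Fuchsianness of $\rho_0$, Proposition \ref{P32} and Goldman's theorem for the non-Fuchsian/non-faithful dichotomy, and the asphericity of surfaces for realising $f_*$ by a genuine map.
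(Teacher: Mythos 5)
Your proposal is correct and follows essentially the same route as the paper: the preceding lemma supplies the closed quotient surface $\Sigma=\hyp^2/\rho(\pi_1S)$ with Fuchsian holonomy $\rho_0$, the equality of images together with the injectivity of $\rho_0$ yields $f_*=\rho_0^{-1}\circ\rho$ satisfying $\rho=\rho_0\circ f_*$, and the $K(\pi,1)$ property of surfaces promotes $f_*$ to a map $f:S\longrightarrow\Sigma$. The only difference is that you also justify the strict genus inequality (non-faithfulness via Goldman's theorem, then surjectivity of $f_*$ plus the Hopfian property of surface groups), a point the paper states but leaves unargued.
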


\noindent  We can now state the following lemma.

\begin{lem}\label{L453}
Let $f:S\longrightarrow \Sigma$ be a branched covering between surfaces. Let $\sigma_0$ be a complete hyperbolic structure on $\Sigma$ with Fuchsian holonomy $\rho_0$. Then the pull-back structure $\sigma=f^*\sigma_0$ is a branched hyperbolic structure on $S$ with purely hyperbolic holonomy $\rho$.
\end{lem}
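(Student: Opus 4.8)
The plan is to construct the pulled-back structure explicitly away from the ramification locus, to identify the cone points that appear over the branch points, and finally to read off the holonomy from a developing map built by composition. The unbranched part and the holonomy computation are essentially formal; the genuine content is the local analysis at the branch points.

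First I would fix the branch locus $B\subset S$ of $f$ and work on $S\setminus B$, where $f$ is an honest covering map, hence a local homeomorphism. Since $\sigma_0$ is a hyperbolic structure, pulling back its charts through the local inverses of $f$ produces an atlas of charts into $\hyp^2$ whose transition maps lie in $\pslr$; this endows $S\setminus B$ with a genuine (unbranched) hyperbolic structure, which is exactly $f^*\sigma_0$ over $S\setminus B$. Equivalently, pulling back the complete hyperbolic metric of $\sigma_0$ by the local isometry $f$ yields a hyperbolic metric on $S\setminus B$. The task of the lemma is then to check that this metric extends across $B$ as a hyperbolic cone metric with integral cone orders, and to identify the holonomy.

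The key step is the local model at a branch point. If $p\in B$ and $q=f(p)$, then in suitable coordinates $f$ has the form $z\mapsto z^{d}$ with local degree $d\ge 2$. Because $\sigma_0$ is unbranched, $q$ is a regular point and a small metric disc around $q$ carries total angle $2\pi$; its preimage under $z\mapsto z^{d}$ wraps $d$ times around $p$, so the pulled-back metric has a cone point at $p$ of cone angle $2\pi d$. Writing $2\pi d=2(k+1)\pi$ gives order $k=d-1$, a positive integer precisely because $d\ge 2$. Choosing a triangulation of $\Sigma$ whose vertices include all branch values and pulling it back, every simplex of $S$ is either embedded by $f$ or meets $B$ only at a vertex, so the pulled-back metric is isometric on each simplex to a hyperbolic triangle. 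Hence $\sigma=f^*\sigma_0$ is a hyperbolic cone-structure all of whose cone points (sitting over $B$, the ramification points of $f$) have positive integer order, i.e.\ a branched hyperbolic structure in the sense of our definition.

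Finally I would compute the holonomy. Lifting $f$ to an equivariant map $\widetilde{f}:\widetilde{S}\to\widetilde{\Sigma}$ between universal covers, so that $\widetilde{f}\circ\gamma=f_*(\gamma)\circ\widetilde{f}$ for every $\gamma\in\pi_1 S$, I would set $\dev_\sigma:=\dev_{\sigma_0}\circ\widetilde{f}$. Since $\dev_{\sigma_0}$ is an isometric diffeomorphism of $\widetilde{\Sigma}$ onto $\hyp^2$, this map is smooth, orientation-preserving, restricts to an isometry on each simplex, and has isolated critical points (exactly the lifts of the branch points); thus it is a developing map for $\sigma$. Combining the $\rho_0$-equivariance of $\dev_{\sigma_0}$ with that of $\widetilde{f}$ yields $\dev_\sigma\circ\gamma=\rho_0(f_*\gamma)\circ\dev_\sigma$, whence $\rho=\rho_0\circ f_*$. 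As $\rho_0$ is Fuchsian its image contains only hyperbolic elements and the identity, and $\rho(\pi_1 S)=\rho_0(f_*\pi_1 S)\subseteq\rho_0(\pi_1\Sigma)$ inherits this property, so $\rho$ is purely hyperbolic. The main obstacle is the branch-point analysis: one must verify carefully that the pulled-back metric genuinely closes up into a standard cone of angle $2\pi d$, and that these cone charts glue compatibly with the unbranched hyperbolic charts on $S\setminus B$, so that $\sigma$ is a bona fide cone-structure rather than merely a singular metric.
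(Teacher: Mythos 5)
Your proposal is correct and takes essentially the same route as the paper: pull back the hyperbolic structure so that cone points of order $d-1$ appear at the ramification points, build the developing map as $\dev_{\sigma_0}\circ\widetilde{f}$, and conclude that $\rho=\rho_0\circ f_*$ has image inside the purely hyperbolic Fuchsian group $\rho_0(\pi_1\Sigma)$. The paper compresses all of this into a ``standard argument''; your write-up simply supplies the local model $z\mapsto z^d$ and the equivariance computation that the paper leaves implicit.
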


\begin{proof}
The hyperbolic structure $\sigma_0$ may be pulled-back to a branched hyperbolic structure $\sigma$ by standard argument and cone points correspond to branch points of $f$; that is points where $f$ fails to be a local homeomorphism. The map $f$ induces a homomorphism $f_*:\pi_1S\longrightarrow \pi_1\Sigma$; and the holonomy $\rho$ for $\sigma$ is given by the composition map $\rho_0\circ f_*:\pi_1S\longrightarrow \pslr$. Hence the image of $\rho$ is contained in the Fuchsian group $\rho_0\big(\pi_1S\big)$ which is purely hyperbolic. In particular, if $\deg f\ge2$, then $\rho$ is a discrete, non-faithful representation, that is not Fuchsian.
\end{proof}

\begin{rmk}
We may note that if $f$ were a covering map in the usual sense, the same arguments show that $\rho$ is Fuchsian. Indeed in such case, by classical covering theory, the homomorphism $f_*$ turns out to be a monomorphism.
\end{rmk}

\noindent This lemma provides a sufficient condition for a purely hyperbolic representation to be holonomy of branched hyperbolic structure. Is it also necessary? We introduce the following definition.

\begin{defn}\label{pmd}
Let $f:S\longrightarrow \Sigma$ be a map between surfaces of degree $1$. We will say that $f$ is a pinch map if there are two simple closed, non-contractible, curves $\alpha$ and $\beta$ meeting transversally at a single point such that $f(\alpha)$ and $f(\beta)$ are contractible in $\Sigma$.
\end{defn}

\noindent Notice that composition of two or more pinch maps as in definition \ref{pmd} is still a pinch map. A pinch map $f:S\longrightarrow \Sigma$ is said to be \emph{simple} if the genus of $\Sigma$ is one less than the genus of $S$. In particular, any pinch map is homotopic to composition of simple pinch maps. We now state the following result.

\begin{lem}\label{L456}
Let $\rho:\pi_1S\longrightarrow \pslr$ be a non-Fuchsian and purely hyperbolic representation. Let $\rho_0:\pi_1\Sigma\longrightarrow \pslr$ be a Fuchsian representation, where the genus of $\Sigma$ is strictly lower than the genus of $S$. Suppose there is a map $f:S\longrightarrow \Sigma$ such that
\begin{itemize}
\item[1] $f$ is a pinch map,
\item[2] $f_*:\pi_1S\longrightarrow \pi_1\Sigma$ is such that $\rho=\rho_0\circ f_*$.
\end{itemize}
Then $\rho$ does not arise as the holonomy of a branched hyperbolic structure.
\end{lem}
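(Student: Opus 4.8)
The plan is to argue by contradiction, reproducing in the present setting the mechanism of the motivating example \ref{me}. Suppose $\rho$ \emph{were} the holonomy of a branched hyperbolic structure $\sigma$ on $S$, and let $\dev_\sigma:\widetilde S\longrightarrow\hyp^2$ be an associated developing map. Since $\rho$ is then geometrisable, Lemma \ref{neccond} guarantees that $\rho(\pi_1S)$ is cocompact, so $\hyp^2/\rho(\pi_1S)$ is a closed surface. Because $f$ has degree one, $f_*$ is surjective, whence $\rho(\pi_1S)=\rho_0\big(f_*(\pi_1S)\big)=\rho_0(\pi_1\Sigma)$; thus $\hyp^2/\rho(\pi_1S)$ is precisely the surface $\Sigma$ carrying the Fuchsian structure with holonomy $\rho_0$. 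I would then compose $\dev_\sigma$ with the covering projection $\hyp^2\longrightarrow\Sigma$. For $\gamma\in\pi_1S$ the element $\rho(\gamma)$ lies in the deck group $\rho(\pi_1S)$, so this composition is $\pi_1S$-invariant and descends to a map $g:S\longrightarrow\Sigma$. As $\dev_\sigma$ is a local isometry away from its isolated critical points and the projection is a local isometry, $g$ is a branched covering between closed surfaces.

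Next I would pin down the homotopy class of $g$ by computing $g_*$ on fundamental groups. The equivariance relation $\dev_\sigma\circ\gamma=\rho(\gamma)\circ\dev_\sigma$ exhibits $\rho(\gamma)$ as the deck transformation of $\hyp^2\longrightarrow\Sigma$ corresponding to $g_*(\gamma)$; since the deck action of $\pi_1\Sigma$ on $\hyp^2$ is exactly $\rho_0$, this reads $\rho=\rho_0\circ g_*$. Combining with hypothesis $(2)$ gives $\rho_0\circ g_*=\rho_0\circ f_*$, and because $\rho_0$ is Fuchsian, hence faithful and in particular injective, we obtain $g_*=f_*$. Surfaces are $K(\pi,1)$-spaces, so two maps inducing the same homomorphism on $\pi_1$ are homotopic; therefore $g\simeq f$. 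As the degree is a homotopy invariant and $f$ has degree one, $\deg g=1$.

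The decisive step, which carries the only genuine content, is that a degree-one branched covering between closed surfaces is a homeomorphism: over a regular value the number of preimages equals the degree, and over a branch value the local degrees are positive integers summing to the degree, so a total degree of one rules out any ramification and forces $g$ to be an honest covering of degree one. But a homeomorphism $g:S\longrightarrow\Sigma$ would force $S$ and $\Sigma$ to have equal genus, contradicting the hypothesis that the genus of $\Sigma$ is strictly lower than that of $S$. (Equivalently, a homeomorphism induces an isomorphism on $\pi_1$, whereas $g_*=f_*$ annihilates the non-trivial class $[\alpha]$ and so fails to be injective.) This contradiction shows that $\rho$ cannot arise as the holonomy of a branched hyperbolic structure. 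I expect the main obstacle to be the bookkeeping of the second paragraph, namely verifying that the descended developing map $g$ is a genuine branched covering onto the correct surface $\Sigma$ and that its homotopy class coincides with that of the given pinch map $f$; once $g_*=f_*$ is in hand the topological conclusion is immediate.
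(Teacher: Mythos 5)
Your proposal is correct and follows essentially the same route as the paper's own proof: descend the developing map to a branched map $S\longrightarrow \hyp^2/\rho(\pi_1S)=\Sigma$, use surjectivity of $f_*$ and faithfulness of $\rho_0$ to identify the induced map with $f_*$, conclude the branched map has degree one, and derive a contradiction since a degree-one branched covering is a homeomorphism. Your version merely fills in details the paper leaves implicit (the deck-transformation computation giving $\rho=\rho_0\circ g_*$, the $K(\pi,1)$ argument, and why degree one forbids ramification), which is all sound.
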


\begin{proof}
The lemma follows from similar arguments of \ref{me}. First of all we may notice that $f_*:\pi_1S\longrightarrow \pi_1\Sigma$ is surjective because $f$ is a pinch map; thus $\rho\big(\pi_1S\big)=\rho_0\big(\pi_1\Sigma\big)$. Suppose there exists a branched hyperbolic structure $\sigma$ with holonomy $\rho$ and consider its developing map $\dev_\sigma:\widetilde{S}\longrightarrow \hyp^2$. Since it is $\big(\pi_1S,\rho(\pi_1S)\big)-$equivariant, it descends to a branched map $b:S\longrightarrow \hyp^2/\rho(\pi_1S)=\Sigma$. The induced map $b_*$ on the fundamental groups is such that  $\rho=\rho_0\circ b_*$,  thus it is just that of the pinching map because it coincides with $f_*$. Hence $\deg\big( S\longrightarrow \hyp^2/\rho(\pi_1S)\big)=1$, implying that such map is a branched map of degree one, that is a homeomorphism, a contradiction.
\end{proof}

\noindent The following corollary is immediate.

\begin{cor}\label{corpinch}
Let $f:S\longrightarrow \Sigma$ be a pinch map. Let $\sigma_0$ be a complete hyperbolic structure on $\Sigma$ with Fuchsian holonomy $\rho_0$. Then $\sigma_0$ cannot be pulled-back to a branched hyperbolic structure on $S$.
\end{cor}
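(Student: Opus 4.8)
The plan is to derive Corollary \ref{corpinch} directly from Lemma \ref{L456}, since all the substantive work has already been carried out there. The statement of the corollary is precisely the contrapositive-flavoured geometric rephrasing of Lemma \ref{L456}: rather than starting from an abstract representation $\rho$ satisfying $\rho=\rho_0\circ f_*$, we are now handed a concrete pinch map $f:S\longrightarrow \Sigma$ together with a complete hyperbolic structure $\sigma_0$ on $\Sigma$ with Fuchsian holonomy $\rho_0$, and we must conclude that the pull-back is obstructed. First I would set $\rho:=\rho_0\circ f_*$, so that by construction $\rho=\rho_0\circ f_*$ and condition (2) of Lemma \ref{L456} holds automatically; condition (1) holds by hypothesis, since $f$ is assumed to be a pinch map. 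The only point requiring a word of justification is that $\rho$ is genuinely a \emph{non-Fuchsian and purely hyperbolic} representation, so that Lemma \ref{L456} is applicable.

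To see that $\rho$ is purely hyperbolic, I would invoke Lemma \ref{L453} (or simply the fact that $\rho_0$ is Fuchsian): the image of $\rho$ is contained in $\rho_0(\pi_1\Sigma)$, which is a Fuchsian group and hence consists only of hyperbolic elements together with the identity. To see that $\rho$ is non-Fuchsian, I would observe that a pinch map has degree $1$ but is not a homotopy equivalence, so $f_*:\pi_1S\longrightarrow \pi_1\Sigma$ fails to be injective; indeed, a pinch map collapses curves $\alpha,\beta$ as in Definition \ref{pmd}, and since $\Sigma$ has strictly smaller genus than $S$, the kernel of $f_*$ is nontrivial. A representation with nontrivial kernel cannot be faithful, and by Goldman's theorem (cited in the discussion following Proposition \ref{P32}) faithful and discrete representations are exactly the Fuchsian ones; hence $\rho$, being discrete but not faithful, is not Fuchsian.

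Having verified all hypotheses, I would apply Lemma \ref{L456} to conclude that $\rho$ does not arise as the holonomy of any branched hyperbolic structure on $S$. Finally I would translate this back into the language of the corollary: if $\sigma_0$ could be pulled back along $f$ to a branched hyperbolic structure $\sigma=f^*\sigma_0$ on $S$, then by the standard equivariance argument (as in the proof of Lemma \ref{L453}) the holonomy of $\sigma$ would be exactly $\rho_0\circ f_*=\rho$, contradicting the conclusion of Lemma \ref{L456}. Therefore $\sigma_0$ cannot be pulled back to a branched hyperbolic structure on $S$.

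I do not expect any real obstacle here, as the corollary is essentially a restatement of Lemma \ref{L456} with the roles of the hypotheses reordered. The only mildly delicate point is making the non-Fuchsian claim airtight: one must be careful that being a pinch map (hence degree one but not $\pi_1$-injective) really does force $f_*$ to have nontrivial kernel, which in turn forces $\rho$ to be non-faithful and therefore non-Fuchsian. This is where I would spend a sentence rather than leaving it implicit, since the applicability of Lemma \ref{L456} hinges on it.
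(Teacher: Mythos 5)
Your proposal is correct and takes essentially the same route as the paper: the paper declares Corollary \ref{corpinch} to be immediate from Lemma \ref{L456}, and your argument is precisely that deduction, with the routine verifications (setting $\rho:=\rho_0\circ f_*$, checking $\rho$ is purely hyperbolic because its image lies in the Fuchsian group $\rho_0(\pi_1\Sigma)$, and non-Fuchsian because the pinch map forces $\ker f_*\neq 1$ and hence $\rho$ is discrete but not faithful) spelled out explicitly. No gap: your care about the non-Fuchsian hypothesis is exactly the point the paper leaves implicit.
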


\begin{lem}\label{Ppb}
Let $f:S\longrightarrow \Sigma$ be the composition of a pinch map with a branched (but not unbranched) covering map. Let $\sigma_0$ be a complete hyperbolic structure on $\Sigma$ with Fuchsian holonomy $\rho_0$. Then $\sigma_0$ can be pulled-back to a branched hyperbolic structure on $S$.
\end{lem}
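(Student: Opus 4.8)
The plan is to split $f$ into its two pieces and geometrise each separately, realising the covering part by an honest pull-back and the pinching part by a cut-and-glue construction that inserts handles with trivial holonomy. By hypothesis I may write $f=c\circ p$, where $p:S\longrightarrow S_1$ is the pinch map onto an intermediate closed surface $S_1$ and $c:S_1\longrightarrow \Sigma$ is the genuine branched covering. Since $p$ is a pinch map the genus drops, so that the genus of $S_1$ is strictly smaller than that of $S$, while $c$ is branched and not unbranched, so by Riemann--Hurwitz the genus of $S_1$ is strictly larger than that of $\Sigma$; in particular $\deg f=\deg c\ge 2$. The induced map is $f_*=c_*\circ p_*$, and as $p_*$ is surjective with kernel the normal closure of the classes of the handles collapsed by $p$, the representation $\rho=\rho_0\circ f_*=(\rho_0\circ c_*)\circ p_*$ is forced to be trivial precisely on these collapsed handle classes.

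First I would deal with the covering $c$. By Lemma \ref{L453} the complete hyperbolic structure $\sigma_0$ on $\Sigma$ pulls back along $c$ to a genuine branched hyperbolic structure $\sigma_1=c^*\sigma_0$ on $S_1$, whose cone points are exactly the ramification points of $c$ and whose holonomy is the purely hyperbolic representation $\rho_1=\rho_0\circ c_*$. The image of $\rho_1$ is a finite-index subgroup of $\rho_0(\pi_1\Sigma)$, hence still cocompact. This settles the geometry over $S_1$; it remains only to recover over $S_1$ the handles that $p$ collapses, and to do so with trivial holonomy so that the total holonomy becomes $\rho_1\circ p_*=\rho$.

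To this end I would use the slit-and-glue construction of Example \ref{ex3}. Concretely, for each collapsed handle I would choose two disjoint geodesic arcs of equal length in regular regions of $(S_1,\sigma_1)$, positioned so that one developed image is carried onto the other by a holonomy element, cut $S_1$ open along them, and glue the two slits by the isometry realising that element; this kills the holonomy around the new handle, and cocompactness of $\rho_1(\pi_1S_1)$ guarantees an abundance of such configurations. Each such move raises the genus by one, contributes a zero-area tube (so the hyperbolic area is preserved, in agreement with Proposition \ref{gbcon}), and turns the slit endpoints into finitely many new cone points of integral order, while the inserted handle carries trivial holonomy. Performing this a finite number of times produces a closed surface homeomorphic to $S$ carrying a branched hyperbolic structure $\sigma$; by construction the generators of the inserted handles lie in the kernel of the holonomy, so the holonomy of $\sigma$ is exactly $\rho=\rho_0\circ f_*$.

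The hard part is the geometric realisation of a single trivial-holonomy handle: one must exhibit pairs of geodesic arcs whose developments are related by a holonomy element, verify that the cut-and-glue operation is an isometry away from the new cone and corner points, and check that the net holonomy generated by the gluing is the identity, so that the outcome is a genuine branched hyperbolic structure rather than merely a topological handle. Once this local model is in place the remaining verifications are routine: since $\rho_0$ is faithful, the developing map of $\sigma$ descends to a branched map of $S$ into $\Sigma$ inducing $f_*$, hence homotopic to $f$, and its degree is $\deg f\ge 2$, so no contradiction of the type met in Lemma \ref{L456} arises. Finally a count via Proposition \ref{P324} confirms that $\mathcal{E}(\rho)=\deg(c)\,\mathcal{E}(\rho_0)$ is compatible with $\chi(S)$ together with the orders of the cone points coming from the ramification of $c$ and from the handle attachments.
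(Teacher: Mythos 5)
Your high-level strategy matches the paper's: split $f$ into a pinch map and a branched covering, pull back $\sigma_0$ along the covering via Lemma \ref{L453} to get a branched hyperbolic structure $\sigma_1$ on the intermediate surface, and then realise each collapsed handle by a cut-and-glue that inserts a handle with trivial holonomy. However, the step you yourself flag as ``the hard part'' --- exhibiting the arcs and proving the inserted handle really has trivial holonomy --- is precisely the content of the lemma, and your sketch of it contains a genuine error. You propose to find two disjoint geodesic arcs \emph{in regular regions} of $(S_1,\sigma_1)$ whose developed images are related by a holonomy element, and you claim that ``cocompactness of $\rho_1(\pi_1S_1)$ guarantees an abundance of such configurations.'' Cocompactness is not what makes such configurations exist. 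If the structure on $S_1$ were complete (however cocompact), the developing map would be an equivariant bijection $\widetilde{S_1}\longrightarrow\hyp^2$, and two arcs with holonomy-related developments would necessarily be the \emph{same} arc downstairs --- this is exactly why Corollary \ref{corpinch} forbids pulling back a complete structure along a pinch map. What produces distinct arcs with matching developments is the \emph{branching}: the paper takes the two arcs $\tau_1,\tau_2$ emanating from a cone point $p$ of $\sigma_1$ (which exists because the covering is genuinely branched), of equal length and at angle $2h\pi$ apart; since the angle is a multiple of $2\pi$, their developed images literally coincide, and one then cuts along their outer halves and reglues crosswise. Your outline never connects the branching hypothesis to the existence of the configuration, and the reason you do give would ``prove'' the construction also in the unbranched case, contradicting Lemma \ref{L456}.

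There is a second, related gap: the assertion that gluing the two slits ``by the isometry realising that element kills the holonomy around the new handle'' is not automatic and is left unverified. The holonomy of the seam-crossing generator equals (up to inversion) the mismatch between the gluing isometry and the holonomy of the path used to close the curve up; it is trivial only if the relating isometry lies in the holonomy group \emph{and} the identification is made with respect to lifts compatible with that closing path --- an arbitrary isometric crosswise gluing of two equal-length arcs always yields \emph{some} branched hyperbolic structure on the genus-increased surface, but generically one whose holonomy does not kill the handle generators, hence is not $\rho_0\circ f_*$. This verification is exactly what the paper's proof carries out at the end: with the cone-point configuration the developments agree on the nose, and one checks on the standard generators that $\overline{\rho}(a_i)=\rho_T(a_i)$, $\overline{\rho}(b_i)=\rho_T(b_i)$ for $i\le g-1$ while $\overline{\rho}(a_g)=\overline{\rho}(b_g)=1$, so that $\overline{\rho}=\rho_T\circ\pi_*=\rho$. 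Until you supply both the existence of the configuration (from a cone point, not from cocompactness) and this holonomy computation, the proposal is an outline that defers the entire mathematical content of the lemma.
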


\begin{proof}
By assumption, $f$ is the composition of a pinch map $\pi:S\longrightarrow \pi(S)$ with a branched covering map $b:\pi(S)\longrightarrow \Sigma$. Set $\pi(S)=T$. First of all we explain why we need to assume $b$ to be a branched, but not unbranched, covering map. If $b$ were an unbranched covering map, then the complete hyperbolic structure $\sigma_0$ pulls back to a complete hyperbolic structure $\sigma$ on $T$. By \ref{corpinch}, $\sigma$ cannot be pulled back to a branched hyperbolic structure on $S$. Hence we assume $b$ to be a branched covering map. In this case, the complete hyperbolic structure $\sigma_0$ pulls back to a branched hyperbolic structure $\sigma_T$ on $T$ with purely hyperbolic holonomy $\rho_T$. We first assume that $\pi$ is a simple pinch map, hence the genus of $T$ is one less than the genus of $S$. Set $\rho=\rho_T\circ\pi_*=\rho_0\circ f_*$. As above, the map $\pi_*:\pi_1S\longrightarrow \pi_1T$ is surjective because $\pi$ is a pinch map, thus $\rho\big(\pi_1S\big)=\rho_T\big(\pi_1T\big)$. Given the well-known standard presentation of the fundamental group of $S$, namely $\pi_1S=\big\langle a_1,b_1,\dots, a_{g},b_{g} \vert [a_1,b_1]\cdots[a_{g},b_{g}]=1\big\rangle$ where $g$ is the genus of $S$, we assume without loss of generality that $\pi_*(a_g)=\pi_*(b_g)=1$. Since $\sigma_T$ is a branched hyperbolic structure with non-Fuchsian holonomy, there is a cone point $p$. Its order $k$ is at least $2$, hence the magnitude of the angle around $p$ is $2k\pi$. Let $\tau_1$ and $\tau_2$ two geodesic segments starting from $p$ both of length $2l$ and such that
\begin{itemize}
\item[\bf 1.] $\tau_1$ and $\tau_2$ intersect only at $p$,
\item[\bf 2.] the angle between $\tau_1$ and $\tau_2$ is $2h\pi$ with $0<h<k$.
\end{itemize}
Notice that a couple of such segments can be always found. Moreover, it is an easy matter to see that $\tau_1$ and $\tau_2$ have the same developed image in $\hyp^2$. For $i=1,2$, define $p_i$ as that extremal point of $\tau_i$ different to $p$. Then define $\gamma_i$ as the sub-arc of $\tau_i$ starting from $p_i$ of length $l$. For $i=1,2$, cut along $\gamma_i$ to get a surface with a piecewise geodesic boundary $\gamma_i^1\cup\gamma_i^2$ and two corner angles. Then glue $\gamma_1^i$ with $\gamma_2^i$, as shown in the figure \ref{cp}, producing an additional handle. 

\begin{figure}[!h]
\centering
\begin{tikzpicture}[thick,scale=0.7, every node/.style={scale=0.7}]
\draw [thick, red] (2.5,5) to (4,5);
\draw [thick, red] (5.5,5) to (4,5);
\draw [thick, green] (2.5,5) to (1,5);
\draw [thick, green] (5.5,5) to (7,5);

\draw [thick] plot [mark=*, smooth] coordinates {(4,5)};
\draw [thick] plot [mark=*, smooth] coordinates {(1,5)};
\draw [thick] plot [mark=*, smooth] coordinates {(7,5)};
\draw [thick] plot [mark=*, smooth] coordinates {(2.5,5)};
\draw [thick] plot [mark=*, smooth] coordinates {(5.5,5)};

\node at (4,5.5) {\text{\emph{p}}};
\node at (1,5.5) {\text{\emph{$p_1$}}};
\node at (7,5.5) {\text{\emph{$p_2$}}};
\node at (1.75,4.5) {\text{\emph{$\gamma_1$}}};
\node at (6.25,4.5) {\text{\emph{$\gamma_2$}}};

\draw[thick, -latex] (8,5) to (10,5);

\draw [thick, red] (12.5,5) to (14,5);
\draw [thick, red] (15.5,5) to (14,5);
\draw[thick, green] (12.5,5) to[out=135, in=45] (11,5);
\draw[thick, green] (12.5,5) to[out=225, in=315] (11,5);
\draw[thick, green] (17,5) to[out=135, in=45] (15.5,5);
\draw[thick, green] (17,5) to[out=225, in=315] (15.5,5);

\draw [thick] plot [mark=*, smooth] coordinates {(14,5)};
\draw [thick] plot [mark=*, smooth] coordinates {(12.5,5)};
\draw [thick] plot [mark=*, smooth] coordinates {(11,5)};
\draw [thick] plot [mark=*, smooth] coordinates {(15.5,5)};
\draw [thick] plot [mark=*, smooth] coordinates {(17,5)};

\node at (14,5.5) {\text{\emph{p}}};
\node at (11,5.5) {\text{\emph{$p_1$}}};
\node at (17,5.5) {\text{\emph{$p_2$}}};
\node at (11.75,4.25) {\text{\emph{$\gamma_1^1$}}};
\node at (16.25,4.25) {\text{\emph{$\gamma_2^1$}}};
\node at (11.75,5.75) {\text{\emph{$\gamma_1^2$}}};
\node at (16.25,5.75) {\text{\emph{$\gamma_2^2$}}};

\draw[thick, -latex] (15,3) to (11,1);

\draw[ thick] (10,2.5) to[out=170, in=0] (7,3);
\draw[ thick] (7,3) to[out=180, in=90] (4,1);
\draw[ thick] (4,1) to[out=270, in=180] (7,-1);
\draw[ thick] (7,-1) to[out=0, in=190] (10,-0.5);

\draw [ultra thick, red] (6,1) to [out=330, in=210] (8,1);
\draw [thick] (5.8,1.2) to [out=300, in=150] (6,1);
\draw [thick] (8,1) to [out=30, in=240] (8.2,1.2);
\draw [ultra thick, red] (6,1) to [out=30, in=150] (8,1);
\draw [ultra thick, green] (6,1) to [out=150, in=30] (4,1);
\draw [ultra thick, dashed, green] (6,1) to [out=210, in=330] (4,1);

\draw[thick] plot [mark=*, smooth] coordinates {(6,1)};
\draw[thick] plot [mark=*, smooth] coordinates {(8,1)};
\draw[thick] plot [mark=*, smooth] coordinates {(4,1)};

\node at (8,1.5) {\text{\emph{p}}};
\end{tikzpicture}
\caption[]{The segment $\gamma_1$ is the sub-arc of $\tau_1$ starting from $p_1$. Similarly, the segment $\gamma_2$ is the sub-arc of $\tau_2$ starting from $p_2$. Both of them are coloured in green. Cut along $\gamma_1$ and $\gamma_2$ in order to get the situation showed in the picture on the right. Now glue $\gamma_{1}^1$ with $\gamma_{2}^1$ and $\gamma_{1}^2$ with $\gamma_2^2$. The result is the handle shown in the picture below.}
\label{cp}
\end{figure}
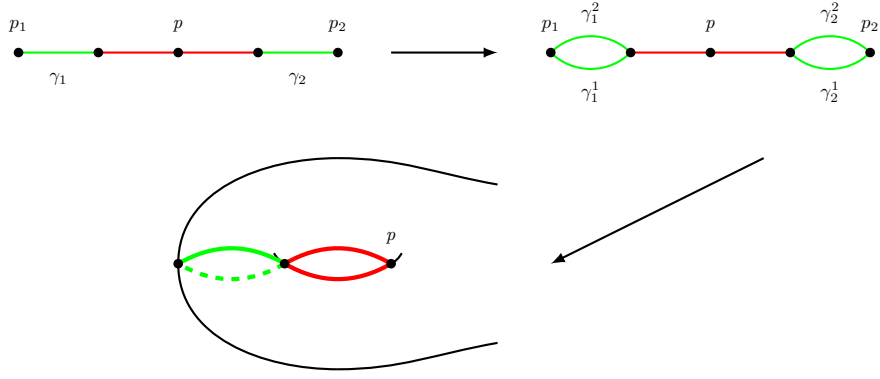

\noindent The surface we get is $S$ endowed with a branched hyperbolic structure $\sigma$ with holonomy $\overline{\rho}$. We are going to show that $\overline{\rho}=\rho$. The cut and paste procedure described above produces two new closed curves, say $a_g$ and $b_g$, such that together with $a_1,b_1,\dots, a_{g-1},b_{g-1}$ generate $\pi_1S$ and satisfies the relation among the commutators. By construction, $\overline{\rho}(a_i)=\rho_T(a_i)$ and $\overline{\rho}(b_i)=\rho_T(b_i)$ for $i=1,\dots,g-1$ and $\overline{\rho}(a_g)=\overline{\rho}(b_g)=1$. Hence the map $\pi_*$ is such that $\overline{\rho}=\rho_T\circ\pi_*$, thus $\overline{\rho}=\rho$. Finally, the case of $\pi$ is the composition of simple pinch maps follows by a recursive application of the previous argument.
\end{proof}

\noindent In order to prove the main theorem we invoke the following result.

\begin{thm}[Edmonds, \cite{EA}]\label{et}
If $f: S\longrightarrow \Sigma$ is a map of nonzero degree between closed orientable surfaces, then there is a pinch map $\pi:S\longrightarrow T$ and there is a branched covering $b:T\longrightarrow \Sigma$ such that the composition $b\circ\pi$ is homotopic to $f$.
\end{thm}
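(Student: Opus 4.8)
\noindent This is Edmonds' theorem, so the plan is to recall the two-step structure of his argument: a covering-space reduction to the case of surjective holonomy, followed by the realisation of such maps by branched coverings. First I would reduce to the case where $f_*$ is surjective. Since $\deg f\neq 0$, the image $H:=f_*(\pi_1 S)$ has finite index in $\pi_1\Sigma$: otherwise the covering $q\colon\widehat\Sigma\to\Sigma$ associated to $H$ is a noncompact surface, so $H_2(\widehat\Sigma;\Z)=0$, and since $f$ lifts through $q$ by the lifting criterion this would force $f_*[S]=0$, hence $\deg f=0$, a contradiction. Taking $q\colon\widehat\Sigma\to\Sigma$ to be the finite unbranched cover corresponding to $H$, the lifting criterion produces $\widehat f\colon S\to\widehat\Sigma$ with $q\circ\widehat f=f$ and $\widehat f_*$ surjective. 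Because an unbranched cover is a branched cover and a composite of branched covers is again one, it suffices to factor $\widehat f$ as $b'\circ\pi$ with $\pi$ a pinch and $b'$ a branched cover of $\widehat\Sigma$: then $f\simeq (q\circ b')\circ\pi$ is the desired factorisation. So I may assume $f_*$ is surjective.

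Next I would record the numerical constraint that makes the construction feasible. Writing $d':=\deg\widehat f\neq 0$, the monotonicity of the Euler characteristic under nonzero-degree maps (Kneser's inequality) gives $-\chi(S)\ge |d'|\,\big(-\chi(\widehat\Sigma)\big)$, i.e. $g(S)\ge 1+|d'|\big(g(\widehat\Sigma)-1\big)$. By Riemann--Hurwitz, a degree-$d'$ branched cover $T\to\widehat\Sigma$ with total branching $B=\sum_p(e_p-1)$ satisfies $g(T)=1+d'\big(g(\widehat\Sigma)-1\big)+B/2$. The inequality therefore leaves room to split the excess genus of $S$ between the branching $B$ imposed by the critical behaviour of $\widehat f$ and the handles that the pinch must collapse, guaranteeing $g(S)\ge g(T)$ for the surface $T$ produced below.

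The substantial step, and the one I expect to be the main obstacle, is to deform $\widehat f$ into branched-cover form after a pinch. Following Edmonds, I would put $\widehat f$ into a generic smooth normal form whose only singularities are fold curves and cusps, and then eliminate the folds by homotopy, since a genuine branched covering is exactly a map whose singular set is a finite collection of branch points with no fold curves. Surjectivity of $\widehat f_*$ is what permits innermost fold curves to be cancelled or pushed off; the folds that cannot be removed bound a union of handles, and collapsing precisely those handles is the pinch $\pi\colon S\to T$ in the sense of Definition \ref{pmd}. The residual map $b'\colon T\to\widehat\Sigma$ then has only branch-point singularities, so it is a branched covering of degree $d'$, with ramification read off from the local degrees of $\widehat f$ at its critical points. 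The delicate points are controlling the fold-elimination moves and verifying that the collapsed locus is an embedded subsurface with closed-surface quotient, so that $\pi$ is honestly a pinch. Assembling the pieces, $b:=q\circ b'\colon T\to\Sigma$ is a branched covering as a composite of branched coverings, and $b\circ\pi=q\circ b'\circ\pi\simeq q\circ\widehat f=f$; since a homotopic factorisation is all that is required, this proves the theorem.
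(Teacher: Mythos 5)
The paper does not prove this statement at all: it is quoted as Edmonds' theorem from \cite{EA} and used as a black box, so there is no internal proof to compare against, and your proposal must be judged as a reconstruction of Edmonds' argument. Your first step is correct and is indeed the standard opening move: since $\deg f\neq 0$, the subgroup $H=f_*(\pi_1 S)$ has finite index (the cover attached to an infinite-index subgroup is a noncompact surface, whose second homology vanishes, forcing $\deg f=0$), the lift $\widehat f$ to the finite cover $\widehat\Sigma$ is $\pi_1$-surjective, and any factorisation of $\widehat f$ composes with the covering $q$ to give one of $f$. Your second paragraph, however, does no work: Kneser's inequality and Riemann--Hurwitz are consistency checks, and the conclusion you draw from them, $g(S)\ge g(T)$, is automatic once $T$ is obtained from $S$ by collapsing handles, so nothing there needed to be ``guaranteed''.

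The genuine gap is your third step, which is the entire content of the theorem and for which you offer a plan rather than an argument. You assert that, after putting $\widehat f$ into fold-and-cusp general position, $\pi_1$-surjectivity ``permits innermost fold curves to be cancelled or pushed off'' and that the folds which cannot be removed ``bound a union of handles''; no mechanism is given for either claim, and you yourself flag them as the delicate points. These claims are precisely Edmonds' two hard theorems: (i) a map of degree $\pm 1$ between closed orientable surfaces is homotopic to a pinch map, and (ii) a $\pi_1$-surjective map of degree $|d|\ge 2$ is homotopic to a branched covering. Granting (i) and (ii), the factorisation follows from your Step 1 by a dichotomy on $\deg\widehat f$ — and note that the resulting structure is sharper than the one you describe: either the pinch is trivial (when $|\deg\widehat f|\ge 2$), or the branched covering is just the honest covering $q$ (when $\deg\widehat f=\pm1$); one never needs residual branching and a nontrivial pinch simultaneously after lifting. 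Be aware also that the fold-and-cusp normal form is an awkward category for this problem: a branched covering has local model $z\mapsto z^k$, which is not a stable singularity, so ``eliminating folds'' must merge generic singularities into non-generic ones, and none of the homotopy moves you name accomplishes this. As written, your proposal reduces the theorem to an unproved assertion essentially equivalent to it.
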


\noindent Using Edmonds' theorem together with the lemmata \ref{L453}, \ref{L456} and \ref{Ppb}, we are able to prove our main theorem.

\begin{thm}\label{mainthm}
Let $\rho:\pi_1S\longrightarrow \pslr$ be a non-Fuchsian, purely hyperbolic and non-elementary representation. Then $\rho$ is geometrisable by a branched hyperbolic structure if and only if $\rho\big(\pi_1S\big)$ is cocompact and the map
\[ f:S\longrightarrow \Sigma=\hyp^2/\rho\big(\pi_1S\big)
\] is not homotopic to a pinch map.
\end{thm}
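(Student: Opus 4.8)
The plan is to establish the two implications separately, in both cases working through the factorisation $\rho=\rho_0\circ f_*$ of Proposition~\ref{L033} and, for the harder direction, through Edmonds' Theorem~\ref{et}.

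\emph{Necessity.} Suppose $\rho$ is geometrisable. That $\rho(\pi_1S)$ is cocompact is precisely Lemma~\ref{neccond}, so the first condition holds and we may legitimately form $\Sigma=\hyp^2/\rho(\pi_1S)$, its Fuchsian holonomy $\rho_0$, and the map $f\colon S\to\Sigma$ with $\rho=\rho_0\circ f_*$. If $f$ were homotopic to a pinch map $f'$, then $f'$ and $f$ would induce the same homomorphism on $\pi_1$ up to conjugation, whence $\rho=\rho_0\circ f'_*$ with $f'$ a pinch map; Lemma~\ref{L456} would then prohibit $\rho$ from being the holonomy of any branched hyperbolic structure, contradicting our assumption. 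Hence $f$ is not homotopic to a pinch map.

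\emph{Sufficiency.} Assume now that $\rho(\pi_1S)$ is cocompact and that $f$ is not homotopic to a pinch map. I would first observe that $f_*$ is surjective: since $\rho_0$ is faithful with image the deck group $\rho(\pi_1S)$, the chain $\rho_0(\pi_1\Sigma)=\rho(\pi_1S)=\rho_0\big(f_*(\pi_1S)\big)$ together with the injectivity of $\rho_0$ yields $f_*(\pi_1S)=\pi_1\Sigma$. Granting that $\deg f\neq0$ (discussed below), Edmonds' Theorem~\ref{et} supplies a factorisation $f\simeq b\circ\pi$ with $\pi\colon S\to T$ a pinch map and $b\colon T\to\Sigma$ a branched covering. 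The heart of the argument is the dichotomy on $b$: from $f_*=b_*\circ\pi_*$ and the surjectivity of $f_*$ we get that $b_*$ is surjective, so if $b$ were an \emph{unbranched} covering then $\deg b=[\pi_1\Sigma:b_*(\pi_1T)]=1$, forcing $b$ to be a homeomorphism and hence $f\simeq b\circ\pi\simeq\pi$ to be a pinch map --- contrary to hypothesis. Therefore $b$ is genuinely branched, and Lemma~\ref{Ppb} (or Lemma~\ref{L453} in the degenerate case where $\pi$ is a homeomorphism) pulls $\sigma_0$ back to a branched hyperbolic structure on $S$ with holonomy $\rho_0\circ f_*=\rho$. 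Thus $\rho$ is geometrisable.

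The step I expect to be the main obstacle is securing $\deg f\neq0$, which is what makes Edmonds' Theorem~\ref{et} applicable. Surjectivity of $f_*$ is not enough on its own, because a $\pi_1$-surjection onto a surface of genus $\ge2$ may have degree zero, and such an $f$ is not homotopic to a pinch map either. The clean way to control this is through the Euler number: the flat $\rp$-bundle of $\rho$ is the pull-back by $f$ of that of $\rho_0$, so $\eu{\rho}=\deg(f)\,\eu{\rho_0}$ with $\eu{\rho_0}=\pm\chi(\Sigma)\neq0$; thus $\deg f\neq0$ is equivalent to $\eu{\rho}\neq0$. Since Propositions~\ref{gbcon} and~\ref{P324} already force $\eu{\rho}\neq0$ for any geometrisable representation, the whole sufficiency direction hinges on verifying $\eu{\rho}\neq0$ (equivalently $\deg f\neq0$) directly from the standing hypotheses, and this is the point that requires the most care.
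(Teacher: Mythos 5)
Your proof is, in structure, the paper's proof: necessity via Lemmata \ref{neccond} and \ref{L456}, sufficiency via Edmonds' Theorem \ref{et} followed by Lemmata \ref{L453} and \ref{Ppb}. In fact you are more careful than the paper, whose sufficiency argument is a bare appeal to those lemmata; the surjectivity of $f_*$ and the dichotomy on $b$ (an unbranched $b$ with $b_*$ surjective has degree one, so $f\simeq\pi$ would be a pinch map, contrary to hypothesis) are precisely the details the paper leaves implicit, and they are correct as you state them.

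The obstacle you flag is a genuine gap, and it is equally a gap in the paper: Theorem \ref{et} is invoked there with no verification that $\deg f\neq 0$, and this cannot be deduced from the standing hypotheses. Your suspicion that $\pi_1$-surjectivity is insufficient is correct. Take $S$ of genus $4$ and $\Sigma$ of genus $2$: collapsing the separating curve of $S$ gives $\Sigma\vee\Sigma$, and folding onto $\Sigma$ by the identity on one wedge summand and an orientation-reversing homeomorphism on the other yields a $\pi_1$-surjective map $f$ of degree $1-1=0$. For any cocompact torsion-free Fuchsian $\rho_0:\pi_1\Sigma\longrightarrow\pslr$, the representation $\rho=\rho_0\circ f_*$ is purely hyperbolic, non-elementary and non-Fuchsian, its image $\rho_0(\pi_1\Sigma)$ is cocompact, and (since $\rho_0$ is injective) the map attached to $\rho$ by Proposition \ref{L033} is exactly this $f$, which is not homotopic to a pinch map because pinch maps have degree $1$ by Definition \ref{pmd} and degree is a homotopy invariant. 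Nevertheless $\eu\rho=\deg(f)\cdot\eu{\rho_0}=0$ --- the multiplicativity of Lemma \ref{L312} holds for arbitrary maps, by naturality of the Euler class of flat bundles --- so $\rho$ is not geometrisable, by Propositions \ref{gbcon} and \ref{P324} (cf.\ Remark \ref{pven}). So this is not a detail one can patch: the ``if'' direction of Theorem \ref{mainthm} fails as stated, and the hypothesis $\eu\rho\neq0$ (equivalently $\deg f\neq0$) must be added, after which both your argument and the paper's go through.
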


\begin{proof}
Let $\rho:\pi_1S\longrightarrow \pslr$ be a non-Fuchsian, purely hyperbolic and non-elementary representation. By Proposition \ref{L033}, there exists a closed surface $\Sigma$ of genus lower than $S$, a Fuchsian representation $\rho_0:\pi_1\Sigma\longrightarrow \pslr$ and a map $f:S\longrightarrow \Sigma$ such that $\rho=\rho_0\circ f_*$. Consider the map $f$. By Edmonds' theorem \ref{et}; there exists an intermediate surface $T$, a pinch map $\pi:S\longrightarrow T$ and a branched covering $b:T \longrightarrow \Sigma$ such that the composition $b\circ\pi$ is homotopic to $f$. Now the sufficient condition comes from Lemmata \ref{L453} and \ref{Ppb}, whereas the necessary condition follows from Lemma \ref{L456}.
\end{proof}

\begin{rmk}
By a recent result of March\'e-Wolff in \cite{MW}, for closed surfaces of genus $2$ there are no purely hyperbolic representations which are not Fuchsian. Moreover, as we will see below in remark \ref{pven}, the Euler numbers of purely hyperbolic representations that arise as the holonomy of a branched hyperbolic structure are always even and different to zero.
\end{rmk}

\begin{rmk}\label{R314}
Let \textsf{Hom}$(\pi_1S,\pslr)$ be the representation variety of all representation $\pi_1S\longrightarrow \pslr$. This space turns out to be a disjoint union of $4g-3$ connected components; which are parametrized by the Euler number (see \cite{GO88}).
In \cite[Proposition 1.2]{FW}, the authors show that the set of discrete and non-faithful representations form a nowhere dense closed subset in each component of the representation variety . Hence, purely hyperbolic representations are essentially rare. In the following subsection we show that they do not appear in each component of the representation variety.
\end{rmk}

\subsection{Euler number of purely hyperbolic representations}
\noindent Let $\rho:\pi_1S\longrightarrow \pslr$ be a purely hyperbolic representation. As above, suppose $\rho\big(\pi_1S\big)$ is of the first kind. It is natural to ask which are the possible values of the Euler number $\eu\rho$. The following result follows from a straightforward computation.

\begin{lem}\label{L312}
Let $f:S\longrightarrow \Sigma$ be a branched covering map, and $\rho_0:\pi_1\Sigma\longrightarrow \pslr$ be a Fuchsian representation. Consider the representation $\rho=\rho_0\circ f_*:\pi_1S\longrightarrow \pslr$; then 
\[  \eu\rho=d\cdot \eu{\rho_0}
\] where $d$ is the degree of $f$.
\end{lem}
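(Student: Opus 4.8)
The plan is to deduce the multiplicativity from the naturality of the Euler class under pullback of flat bundles. First I would observe that the flat $\rp$-bundle $\mathcal{F}_{\rho}$ over $S$ attached to $\rho=\rho_0\circ f_*$ is canonically the pullback $f^*\mathcal{F}_{\rho_0}$ of the flat bundle over $\Sigma$. Indeed $\mathcal{F}_{\rho_0}=\widetilde{\Sigma}\times_{\rho_0}\rp$, and pulling back along $f$ replaces the $\pi_1\Sigma$-action by the $\pi_1S$-action obtained through $f_*$, yielding $\widetilde{S}\times_{\rho_0\circ f_*}\rp=\mathcal{F}_{\rho}$. Because the Euler class is a characteristic class, it is natural with respect to this pullback, so that $e(\rho)=f^*e(\rho_0)$ in $H^2(S,\Z)$.

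Next I would pair with the fundamental class. By definition $\eu\rho=\langle e(\rho),[S]\rangle$, and the projection formula for the Kronecker pairing gives
\[
\eu\rho=\langle f^*e(\rho_0),[S]\rangle=\langle e(\rho_0),f_*[S]\rangle.
\]
The proof then reduces to identifying $f_*[S]\in H_2(\Sigma,\Z)$. Since $f$ is a branched covering of degree $d$, it restricts to an honest $d$-sheeted covering over the complement of the finite branch locus, whence its mapping degree equals $d$ and $f_*[S]=d\,[\Sigma]$. Substituting, $\eu\rho=d\,\langle e(\rho_0),[\Sigma]\rangle=d\cdot\eu{\rho_0}$, as claimed.

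I expect the only genuinely delicate point to be the homological identity $f_*[S]=d\,[\Sigma]$: one must be sure that the covering degree of the branched cover coincides with its mapping degree on top homology. This is standard — removing the finite set of branch values does not affect $H_2$ in the relevant way, and off that set $f$ is an ordinary covering — but it is precisely the step where the hypothesis that $f$ is a branched \emph{covering}, rather than an arbitrary map, is essential.

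Alternatively, and more in the spirit of the combinatorial definition used earlier, I could argue directly through Lemma \ref{L321}. Choosing a triangulation of $\Sigma$ whose vertices include every branch value, its $f$-preimage is a triangulation of $S$ in which each $2$-simplex of $\Sigma$ is evenly covered by exactly $d$ simplices, each inheriting the same spin number $d_T$ because the section data pulls back. Summing $\eu\rho=\sum_{T\in\tau_S}d_T$ over these fibres would then recover $d\cdot\eu{\rho_0}$ without invoking characteristic-class naturality.
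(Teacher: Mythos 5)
Your proposal is correct, but there is nothing in the paper to compare it against: the paper states Lemma \ref{L312} with only the remark that it ``follows from a straightforward computation'' and gives no argument at all. Both of your routes genuinely fill this gap. The first (naturality) argument is sound: the flat bundle of $\rho_0\circ f_*$ is indeed $f^*\mathcal{F}_{\rho_0}$, the Euler class is natural, and the projection formula reduces everything to $f_*[S]=d\,[\Sigma]$, which holds because a branched covering of closed oriented surfaces is an honest $d$-sheeted covering off a finite set. Your second, combinatorial argument is the one closer in spirit to the paper, since it runs through the cochain-level definition of $e(\rho)$ and Lemma \ref{L321} exactly as the paper's proof of Proposition \ref{P324} does: pull back a triangulation of $\Sigma$ whose vertices contain the branch values, pull back the section over the $1$-skeleton, and observe that each $2$-simplex of $\Sigma$ contributes $d$ simplices of $S$ with the same spin number. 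The only point you should make explicit in either version is the orientation convention: the identity $f_*[S]=d\,[\Sigma]$ (equivalently, $d_{T'}=d_T$ rather than $-d_T$) requires $f$ to be orientation-preserving, which is the standing assumption for branched coverings here (local model $z\mapsto z^k$); otherwise the formula holds only up to sign, consistent with the sign ambiguity the paper itself carries in Proposition \ref{P324}.
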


\noindent Hence, the following lemma follows immediately from the previous one.

\begin{lem}\label{L313}
Let $\rho$ be a non-Fuchsian, purely hyperbolic representation, such that $\rho\big(\pi_1S\big)$ is a cocompact subgroup of $\pslr$. Then $\eu\rho$ is even.
\end{lem}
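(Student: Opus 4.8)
The plan is to reduce the statement to Lemma \ref{L312} by invoking the structural decomposition of $\rho$ already established earlier in the paper. Since $\rho$ is non-Fuchsian, purely hyperbolic and $\rho(\pi_1S)$ is cocompact, Proposition \ref{L033} provides a closed surface $\Sigma$ of genus strictly less than that of $S$, a Fuchsian representation $\rho_0:\pi_1\Sigma\longrightarrow\pslr$, and a map $f:S\longrightarrow\Sigma$ with $\rho=\rho_0\circ f_*$. By Lemma \ref{L312} we then have $\eu\rho=d\cdot\eu{\rho_0}$, where $d=\deg f$. The whole problem therefore collapses to controlling the parity of the right-hand side.

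First I would pin down $\eu{\rho_0}$. Since $\rho_0$ is the Fuchsian holonomy of the complete hyperbolic structure on the closed surface $\Sigma$, the developing map is an honest isometry (no branch points), so Proposition \ref{P324} applies with all cone orders equal to zero, giving $\eu{\rho_0}=\pm\chi(\Sigma)=\mp(2\mathfrak{g}_\Sigma-2)$, where $\mathfrak{g}_\Sigma$ is the genus of $\Sigma$. In particular $\eu{\rho_0}$ is \emph{even}, being twice the integer $\pm(\mathfrak{g}_\Sigma-1)$. Consequently $\eu\rho=d\cdot\eu{\rho_0}$ is an even integer for \emph{any} integer $d$, and the evenness claim follows at once without any further parity analysis of the degree.

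The one point requiring care is the hypothesis of Lemma \ref{L312}: it is stated for a \emph{branched covering map} $f$, whereas Proposition \ref{L033} only produces a map $f$ realizing the algebraic relation $\rho=\rho_0\circ f_*$. Here I would appeal to Edmonds' theorem \ref{et}: since $\rho$ is non-elementary its Euler number is nonzero (an elementary/abelian representation has $\eu{}=0$), so $\deg f\neq 0$, and $f$ is homotopic to a composition $b\circ\pi$ of a pinch map $\pi$ with a branched covering $b$. A pinch map has degree one, so $\deg f=\deg b$, and replacing $f$ by $b$ does not alter $\eu\rho$ since the Euler number depends only on the conjugacy class of $\rho$, which in turn depends only on $f_*$ up to the identifications already used. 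Thus Lemma \ref{L312} legitimately applies to $b$, yielding $\eu\rho=\deg(b)\cdot\eu{\rho_0}$.

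The main obstacle I anticipate is purely bookkeeping: one must be sure that the factorization coming from Edmonds' theorem is compatible with the equality $\rho=\rho_0\circ f_*$, i.e. that $\rho=\rho_0\circ b_*\circ\pi_*$ with $\rho_0\circ b_*$ still Fuchsian-up-to-the-relevant-image and $\pi_*$ surjective, so that the degree entering Lemma \ref{L312} is genuinely $\deg b$ and not something twisted by the pinch. Once this is checked, the conclusion is immediate, and in fact the argument shows slightly more: $\eu\rho=\pm d(2\mathfrak{g}_\Sigma-2)$, so the Euler number is not merely even but an even multiple of $\chi(\Sigma)$.
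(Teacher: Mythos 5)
Your overall skeleton---Proposition \ref{L033} to produce $\Sigma$, $\rho_0$ and $f$ with $\rho=\rho_0\circ f_*$, then Lemma \ref{L312} to get $\eu\rho=d\cdot\eu{\rho_0}$, then parity from $\eu{\rho_0}=\pm\chi(\Sigma)$---is exactly the paper's proof, and your computation of $\eu{\rho_0}$ is fine. The genuine gap is in the patch you insert to legitimise Lemma \ref{L312}: you claim that since $\rho$ is non-elementary its Euler number is nonzero, hence $\deg f\neq 0$, hence Edmonds' theorem \ref{et} applies. That implication is false. The paper proves only that \emph{elementary} representations have zero Euler number; the converse does not hold, and it fails precisely within the hypotheses of this lemma. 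Concretely, let $\Sigma$ have genus $2$ and let $S$ have genus $4$; pinch a separating curve of $S$ splitting it into two genus-$2$ pieces to get a map $S\longrightarrow \Sigma\vee\Sigma$, then fold the wedge onto $\Sigma$ using the identity on one factor and an orientation-reversing homeomorphism on the other. The resulting map $f:S\longrightarrow\Sigma$ is $\pi_1$-surjective but has degree $1+(-1)=0$, so $\rho=\rho_0\circ f_*$ is purely hyperbolic, non-elementary, non-Fuchsian, with cocompact image, and $\eu\rho=0$. For such a $\rho$ your argument breaks down: $\deg f=0$, so Edmonds' theorem (whose hypothesis is nonzero degree) cannot be invoked and no factorisation through a branched covering is available. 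There is also a circularity lurking in your step ``$\eu\rho\neq 0$, so $\deg f\neq 0$'': to draw that conclusion you already need the identity $\eu\rho=\deg(f)\cdot\eu{\rho_0}$ for the arbitrary map $f$, which is the very thing your detour through Edmonds is meant to justify.

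The correct repair is much simpler, and is implicitly how the paper reads Lemma \ref{L312}: the identity $\eu{\rho_0\circ f_*}=\deg(f)\cdot\eu{\rho_0}$ holds for \emph{every} continuous map $f$, branched covering or not, by naturality of the Euler class of flat bundles. Indeed the flat $\rp$-bundle $\mathcal{F}_{\rho_0\circ f_*}$ is the pullback $f^*\mathcal{F}_{\rho_0}$, so $e(\rho)=f^*e(\rho_0)$ and evaluation against the fundamental class gives $\eu\rho=\langle e(\rho_0),f_*[S]\rangle=\deg(f)\,\eu{\rho_0}$. With this, no hypothesis on $\deg f$ is needed: either $\deg f\neq 0$ and $\eu\rho$ is an integer multiple of the even number $\pm\chi(\Sigma)=\pm(2-2g_\Sigma)$, or $\deg f=0$ and $\eu\rho=0$; in both cases $\eu\rho$ is even. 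Your instinct that the hypothesis of Lemma \ref{L312} (as literally stated, for branched coverings) does not match the map supplied by Proposition \ref{L033} is a fair criticism of the paper's exposition, but the fix is naturality of the Euler class, not Edmonds' theorem.
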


\begin{proof}
By lemma \ref{L033} there exists a closed surface $\Sigma$ of genus lower than $S$, a Fuchsian representation $\rho_0:\pi_1\Sigma\longrightarrow \pslr$ and a map $f:S\longrightarrow \Sigma$ such that
$\rho=\rho_0\circ f_*$.  By lemma \ref{L312}, we have that $\eu\rho=d\cdot \eu{\rho_0}$; where $d$ is the degree of $f$. Since $\rho_0$ is Fuchsian, then $\eu{\rho_0}=\pm\chi(\Sigma)=\pm\big( 2-2g_\Sigma\big)$. Hence $\eu\rho$ is always even.
\end{proof}

\begin{rmk}\label{pven}
Let $\rho:\pi_1S\longrightarrow \pslr$ be a purely hyperbolic representation arising as the holonomy of a branched hyperbolic structure $\sigma$ on $S$. By \ref{L313}, the Euler number of $\rho$ is even, however it can never be zero. Indeed, suppose $\eu\rho=0$, then we have the following absurd chain of equalities and inequalities:
\[ 0=\eu\rho=\pm\bigg(\chi(S)+\sum_{i=1}^n k_i \bigg)<0
\] where $k_i$ are the orders of the conical points of $\sigma$. The second equality holds by \ref{P324}, and the inequality holds by \ref{gbcon}.
\end{rmk}

\noindent Following remark \ref{R314} we give an example of a non-purely hyperbolic representation with even Euler number.

\begin{ex}\label{ex4} Despite the Euler number of purely hyperbolic representation is always even (and not zero) by Lemma \ref{L313}, not every geometrisable non- Fuchsian representation with even Euler number is purely hyperbolic. Here we propose an explicit example. Let $S$ be a closed surface of genus $2$ with a branched hyperbolic structure $\sigma_0$ with a single cone point of angle $4\pi$ and let $\rho_0$ be its holonomy representation. Notice that $\eu{\rho_0}=\pm1$, depending on the orientation of $S$, suppose $\eu{\rho_0}=-1$. Consider a geodesic segment of length $l$ on $S$ and cut along it to get a new surface, say $\dot S$ homotopically equivalent to $S$ with an open disc removed. Geometrically speaking, the new surface $\dot S$ inherits the branched hyperbolic structure coming from $S$ and has a piecewise geodesic boundary $\gamma$ with two corner points of angle $2\pi$. As in \ref{ex3}, take two copies $S_1$ and $S_2$ of $\dot S$ and glue them along the boundaries with the obvious identification as in figure \ref{gp}. The resulting surface turns out to be a closed surface of genus $4$ endowed with a branched hyperbolic structure $\sigma$ with four cone points of angle $4\pi$. The holonomy $\rho$ of $\sigma$ is given by $ \rho:\pi_1S \ast_{\langle \gamma\rangle} \pi_1S\longrightarrow \pslr$. In \cite{MW}, the Authors showed that for surfaces of genus two any non-Fuchsian representation sends a simple curve to a non-hyperbolic element, hence $\rho_0$ does since $\eu{\rho_0}=-1$. The image of $\rho$ clearly coincides with the image of $\rho_0$, hence we may conclude that $\rho$ is not purely hyperbolic.
\end{ex}

\printbibliography
\rhead[\fancyplain{}{\bfseries
Bibliography}]{\fancyplain{}{\bfseries\thepage}}
\lhead[\fancyplain{}{\bfseries\thepage}]{\fancyplain{}{\bfseries
Bibliography}}

\end{document}